\newtheorem{thm}{Theorem}
\newtheorem{lem}[thm]{Lemma}
\newtheorem{cor}[thm]{Corollary}
\newtheorem{con}[thm]{Conjecture}  
\newtheorem{obs}[thm]{Observation}
\theoremstyle{definition}
\title{The graph grabbing game on  blow-ups of trees and cycles}
\author{Sopon Boriboon\thanks{\,Department of Mathematics and Computer Science, Faculty of Science, Chulalongkorn University, Bangkok 10330, Thailand; \texttt{soponboriboon@gmail.com}.}
  \and Teeradej Kittipassorn\thanks{\,Department of Mathematics and Computer Science, Faculty of Science, Chulalongkorn University, Bangkok 10330, Thailand; \texttt{teeradej.k@chula.ac.th}.}}
\begin{document}
	
		\maketitle
		
\begin{abstract}
	The graph grabbing game is played on a non-negatively weighted connected graph by Alice and Bob who alternately claim a non-cut vertex from the remaining graph, where Alice plays first, to maximize the weights on their respective claimed vertices at the end of the game when all vertices have been claimed. Seacrest and Seacrest conjectured that Alice can secure at least half of the weight of every weighted connected bipartite even graph. Later, Egawa, Enomoto and Matsumoto partially confirmed this conjecture by showing that Alice wins the game on a class of weighted connected bipartite even graphs called {$K_{m,n}$-trees}. 
	We extend the result on this class to include a number of graphs, e.g. even blow-ups of trees and cycles.
\end{abstract}
	
\section{Introduction}

	A vertex $v$ of a connected graph $G$ is a \emph{cut vertex} if $G-v$ is disconnected. A graph $G$ is \emph{even} (resp. \emph{odd}) if the number of vertices of $G$ is even (resp. odd). A \emph{weighted graph} $G$ is a graph $G$ with a weighted function $w:V(G)\rightarrow\mathbb{R}^+\cup\{0\}$.

	The \emph{graph grabbing game} is played on a non-negatively weighted connected graph by two players: Alice and Bob who alternately claim a non-cut vertex from the remaining graph and collect the weight on the vertex, where Alice plays first. The aim of each player is to maximize the weights on their respective claimed vertices at the end of the game when all vertices have been claimed. Alice \emph{wins} the game if she gains at least half of the total weight of the graph. 
	
	The first version of the graph grabbing game appeared in the first problem in Winkler's puzzle book (2003)~\cite{Winkler}, where he gave a winning strategy for Alice on every weighted even path and he observed that there is a weighted odd path on which Alice
	cannot win. In 2009, Rosenfeld~\cite{Rosenfeld:gold} proposed the game for trees and call it the \emph{gold grabbing game}. In 2011, Micek and Walczak~\cite{Micek:subdividedstar} generalized the game to general graphs and call it the graph grabbing game. They showed that Alice can secure at least a quarter of the weight of every weighted even tree and they conjectured that Alice can in fact secure at least half of the weight of every weighted even tree. Later in 2012, Seacrest and Seacrest~\cite{Seacrest:tree} solved this conjecture by considering a vertex-rooted version of the game and they posed the following conjecture. 

\begin{con}[\cite{Seacrest:tree}] \label{conj}
	Alice wins the game on every weighted connected bipartite even graph.
\end{con}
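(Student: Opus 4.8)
The plan is to attempt strong induction on the number of vertices, using the natural reduction that couples each of Alice's moves with Bob's immediate reply. Suppose $G$ is a weighted connected bipartite even graph of total weight $W$ with $2n \ge 4$ vertices, and assume the statement for all smaller such graphs. If Alice claims a non-cut vertex $a$ and Bob replies with a non-cut vertex $b$ of $G - a$, then $G - a - b$ is again connected (since $b$ is non-cut in $G - a$), bipartite, and even, so by the induction hypothesis Alice, moving first in this subgame, secures at least $\frac12\bigl(W - w(a) - w(b)\bigr)$. Hence her total is at least $w(a) + \frac12\bigl(W - w(a) - w(b)\bigr) = \frac12 W + \frac12\bigl(w(a) - w(b)\bigr)$, which is at least $\frac12 W$ provided $w(a) \ge w(b)$. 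Everything thus reduces to finding a first move $a$ with $w(a) \ge w(b)$ for \emph{every} non-cut vertex $b$ of $G - a$.

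The trouble is that this local claim is simply false in general: the heaviest legal vertex of $G$, once removed, can expose a previously cut vertex of even larger weight, which Bob then grabs. This already happens on a path, where deleting an endpoint uncovers a possibly heavier interior vertex, and it is exactly why Winkler's and Seacrest--Seacrest's arguments are not a one-move induction but a global parity strategy. The correct generalization is therefore a \emph{colour-forcing} strategy: fix the heavier of the two colour classes, say $X$, and have Alice aim to claim precisely the vertices of $X$. She always takes a non-cut vertex in $X$ and must arrange that after each of her moves every non-cut vertex of the remaining graph lies in the lighter class $Y$, forcing Bob to grab from $Y$. On a path this succeeds because deleting an endpoint leaves both new endpoints in the opposite class; the task is to isolate a structural condition under which this colour-forcing invariant can be maintained on a general bipartite graph.

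A complementary line of attack bases Alice's replies on a matching rather than on a colour class: if $G$ carried a perfect matching $M$, one would try to have Alice answer Bob's vertex $b$ with its partner $M(b)$, consuming the two colour classes in balanced pairs. I would first reduce to the $2$-connected case through the block--cut tree, handling the articulation structure with the help of the tree result already available from Seacrest--Seacrest, and then try to combine a matching on each block with the colour-forcing invariant above. Note, however, that a perfect matching need not exist even for small bipartite even graphs such as $K_{1,3}$, so the matching response cannot be the whole story.

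The step I expect to be the genuine obstacle is maintaining the colour-forcing (or matching) invariant in the presence of newly exposed non-cut vertices: deleting a vertex can turn several former cut vertices of either colour into legal, and possibly heavy, moves for Bob, and there is no structural guarantee in an arbitrary bipartite even graph that the exposed vertices all lie in the lighter class or can be neutralized by a matching reply. This is precisely why the conjecture remains open in full, and why one restricts to classes—trees, $K_{m,n}$-trees, and the even blow-ups of trees and cycles treated here—whose block and blow-up structure makes the set of exposed non-cut vertices predictable enough to keep the invariant under control.
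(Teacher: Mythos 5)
The statement you were asked to prove is Conjecture~\ref{conj} --- the Seacrest--Seacrest conjecture --- which the paper itself does \emph{not} prove: the concluding remarks state explicitly that it is still open, and the paper's contribution (Theorem~\ref{thm:mainthm} and Corollary~\ref{cor:blowupcycle}) is only a partial confirmation for even B$(T)$-trees and even blow-ups of cycles, obtained via a set-rooted version of the game and a simultaneous induction on Lemmas~\ref{lem1} and~\ref{lem2}. So there is no proof in the paper to compare yours against, and your proposal, by your own admission, is not a proof either: it is an exploration of strategies together with an accurate explanation of why each one breaks down.

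Your diagnosis of the failure points is correct. The one-move induction in your first paragraph is sound as a reduction --- if Alice plays a non-cut vertex $a$ and Bob replies with a non-cut vertex $b$ of $G-a$, then $G-a-b$ is connected, bipartite and even, and the induction hypothesis applies --- but it needs a first move $a$ with $w(a)\geq w(b)$ for \emph{every} legal reply $b$, and no such vertex need exist, since removing $a$ can expose a heavier, previously cut vertex. This is exactly why the literature (Winkler, Seacrest--Seacrest, Egawa--Enomoto--Matsumoto, and this paper) does not use a one-move induction but instead strengthens the induction hypothesis to statements about rooted games, where the root set constrains which vertices can become exposed; your colour-forcing and matching invariants are weaker cousins of that idea, and as you note, neither can be maintained on an arbitrary bipartite even graph. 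The genuine gap, then, is not a repairable step in your argument but the absence of any argument: the conjecture is open, your write-up correctly concludes as much, and the only proofs available (in this paper or elsewhere) are for restricted classes whose structure makes the rooted-game induction close.
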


	In 2018, Egawa, Enomoto and Matsumoto~\cite{Egawa:Kmntree} gave a supporting evidence for this conjecture. They generalized the proof of Seacrest and Seacrest by considering a set-rooted version of the game to prove that Alice wins the game on every weighted even \emph{$K_{m,n}$-tree}, namely a bipartite graph obtained from a complete bipartite graph $K_{m,n}$ on $[m+n]$ and trees $T_1,\dots,T_{m+n}$ by identifying vertex $i$ of $K_{m,n}$ with exactly one vertex of $T_i$ for each $i\in[m+n]$. 

	For a graph $G$ with vertices $v_1,\dots,v_k$ and non-empty  sets $V_1,\dots,V_k$, a \emph{blow-up} B($G$) of $G$ is a graph obtained from $G$ by replacing $v_1,\dots,v_k$ with $V_1,\dots,V_k$, respectively where, for each $i,j\in[k]$,  vertices $x\in V_i$ and $y\in V_j$ are adjacent in B($G$) if and only if $v_i$ and $v_j$ are adjacent in $G$. For a graph $G$ on $[k]$ and trees $T_1,\dots,T_k$, a \emph{$G$-tree} is a graph obtained from $G$ by identifying vertex $i$ of $G$ with  exactly one vertex of $T_i$ for each $i\in[k]$. For a tree $T$, we note that a B$(T)$-tree and B($C_{2n}$) are connected bipartite graphs, and a B$(T)$-tree is a $K_{m,n}$-tree when $T$ is the path on two vertices.
 	\begin{figure}[H] \centering
 		\begin{tikzpicture}[baseline=0.5ex,scale=0.65]	
 		\draw[thick] (1.5,2) -- (2.5,2.5) -- (2,3.5) -- (1,3.5);
 		\draw[thick] (2.5,2.5) -- (3.5,2) -- (4.5,2) ;
 		\draw[thick] (2,3.5) -- (3,4);
 		\draw[fill=black] (1.5,2) circle (2.5pt) node[below=2pt] {$v_5$};
 		\draw[fill=black] (1,3.5) circle (2.5pt) node[above=2pt] {$v_1$};
 		\draw[fill=black] (2.5,2.5) circle (2.5pt) node[above right=-1pt] {$v_4$};
 		\draw[fill=black] (2,3.5) circle (2.5pt) node[above=2pt] {$v_2$};
 		\draw[fill=black] (3.5,2) circle (2.5pt) node[below=2pt] {$v_6$};
 		\draw[fill=black] (3,4) circle (2.5pt) node[above=2pt] {$v_3$};
 		\draw[fill=black] (4.5,2) circle (2.5pt) node[below=2pt] {$v_7$};
 		\draw (2.5,0) node {$T$};
 		\draw[thick] (8,2) -- (9.5 ,3);
 		\draw[thick] (9.5 ,3) -- (8,2.5);
 		\draw[thick] (8,2.5) -- (9.5,2.5);
 		\draw[thick] (9.5,2.5) -- (8,2);
 		\draw[thick] (7,4.25) -- (8.5,4.5);
 		\draw[thick] (8.5,4.5) -- (7,4.75) ;
 		\draw[thick] (9.5 ,3) -- (8.5,4.5);
 		\draw[thick] (8.5,4.5) -- (9.5,2.5);
 		\draw[thick] (9.5 ,3) -- (11,1.5);
 		\draw[thick] (11,1.5) -- (9.5,2.5);
 		\draw[thick] (9.5 ,3) -- (11,2);
 		\draw[thick] (11,2) -- (9.5,2.5);
 		\draw[thick] (9.5 ,3) -- (11,2.5);
 		\draw[thick] (11,2.5) -- (9.5,2.5);
 		\draw[thick] (12.5,1.75) -- (11,2.5);
 		\draw[thick] (11,2.5) -- (12.5,2.25);	
 		\draw[thick] (12.5,1.75) -- (11,2);
 		\draw[thick] (11,2) -- (12.5,2.25);	
 		\draw[thick] (12.5,1.75) -- (11,1.5);
 		\draw[thick] (11,1.5) -- (12.5,2.25);
 		\draw[thick] (10,5.5) -- (8.5,4.5);
 		\draw[thick] (8.5,4.5) -- (10,6);
 		\draw[thick] (10,5) -- (8.5,4.5);	
 		\draw[fill=black] (8,2) circle (2.5pt);
 		\draw[fill=black] (8,2.5) circle (2.5pt);
 		\draw[fill=black] (7,4.25) circle (2.5pt);
 		\draw[fill=black] (7,4.75) circle (2.5pt);
 		\draw[fill=black] (9.5 ,3) circle (2.5pt);
 		\draw[fill=black] (9.5,2.5) circle (2.5pt);
 		\draw[fill=black] (8.5,4.5) circle (2.5pt);
 		\draw[fill=black] (11,1.5) circle (2.5pt);
 		\draw[fill=black] (11,2) circle (2.5pt);
 		\draw[fill=black] (11,2.5) circle (2.5pt);
 		\draw[fill=black] (10,5) circle (2.5pt);
 		\draw[fill=black] (10,5.5) circle (2.5pt);
 		\draw[fill=black] (10,6) circle (2.5pt);
 		\draw[fill=black] (12.5,1.75) circle (2.5pt);
 		\draw[fill=black] (12.5,2.25) circle (2.5pt);
 		\draw[rotate=0] (8,2.25) ellipse (3mm and 8mm);
 		\draw[rotate=0] (7,4.5) ellipse (3mm and 8mm);
 		\draw[rotate=0] (8.5,4.5) ellipse (3mm and 3.5mm);
 		\draw[rotate=0] (12.5,2) ellipse (3mm and 8mm);
 		\draw[rotate=0] (9.5,2.75) ellipse (3mm and 8mm);
 		\draw[rotate=0] (11,2) ellipse (3mm and 10mm);
 		\draw[rotate=0] (10,5.5) ellipse (3mm and 10mm);
 		\draw[fill=black] (8,1) node {$V_5$};
 		\draw[fill=black] (9.5,1.5) node {$V_4$};
 		\draw[fill=black] (11,0.5) node {$V_6$};
 		\draw[fill=black] (12.5,0.75) node {$V_7$};
 		\draw[fill=black] (7,5.75) node {$V_1$};
 		\draw[fill=black] (8.5,5.25) node {$V_2$};
 		\draw[fill=black] (10.75,5.5) node {$V_3$};
 		\draw (9.5,0) node {B$(T)$};
 		\draw[thick] (17,2) -- (18.5 ,3);
 		\draw[thick] (18.5 ,3) -- (17,2.5);
 		\draw[thick] (17,2.5) -- (18.5,2.5);
 		\draw[thick] (18.5,2.5) -- (17,2);
 		\draw[thick] (16,4.25) -- (17.5,4.5);
 		\draw[thick] (17.5,4.5) -- (16,4.75) ;
 		\draw[thick] (18.5 ,3) -- (17.5,4.5);
 		\draw[thick] (17.5,4.5) -- (18.5,2.5);
 		\draw[thick] (18.5,3) -- (20,1.5);
 		\draw[thick] (20,1.5) -- (18.5,2.5);
 		\draw[thick] (18.5,3) -- (20,2);
 		\draw[thick] (20,2) -- (18.5,2.5);
 		\draw[thick] (18.5 ,3) -- (20,2.5);
 		\draw[thick] (20,2.5) -- (18.5,2.5);
 		\draw[thick] (21.5,1.75) -- (20,2.5);
 		\draw[thick] (20,2.5) -- (21.5,2.25);	
 		\draw[thick] (21.5,1.75) -- (20,2);
 		\draw[thick] (20,2) -- (21.5,2.25);	
 		\draw[thick] (21.5,1.75) -- (20,1.5);
 		\draw[thick] (20,1.5) -- (21.5,2.25);
 		\draw[thick] (19,5.5) -- (17.5,4.5);
 		\draw[thick] (17.5,4.5) -- (19,6);
 		\draw[thick] (19,5) -- (17.5,4.5);		
 		\draw[fill=black] (17,2) circle (2.5pt);
 		\draw[fill=black] (17,2.5) circle (2.5pt);
 		\draw[fill=black] (16,4.25) circle (2.5pt);
 		\draw[fill=black] (16,4.75) circle (2.5pt);
 		\draw[fill=black] (18.5,3) circle (2.5pt);
 		\draw[fill=black] (18.5,2.5) circle (2.5pt);
 		\draw[fill=black] (17.5,4.5) circle (2.5pt);
 		\draw[fill=black] (20,1.5) circle (2.5pt);
 		\draw[fill=black] (20,2) circle (2.5pt);
 		\draw[fill=black] (20,2.5) circle (2.5pt);
 		\draw[fill=black] (19,5) circle (2.5pt);
 		\draw[fill=black] (19,5.5) circle (2.5pt);
 		\draw[fill=black] (19,6) circle (2.5pt);
 		\draw[fill=black] (21.5,1.75) circle (2.5pt);
 		\draw[fill=black] (21.5,2.25) circle (2.5pt);
 		\draw[rotate=0] (17,2.25) ellipse (2.5mm and 6mm);
 		\draw[rotate=0] (16,4.5) ellipse (2.5mm and 6mm);
 		\draw[rotate=0] (17.5,4.5) ellipse (2.5mm and 3.5mm);
 		\draw[rotate=0] (21.5,2) ellipse (2.5mm and 6mm);
 		\draw[rotate=0] (18.5,2.75) ellipse (2.5mm and 6mm);
 		\draw[rotate=0] (20,2) ellipse (2.5mm and 8mm);
 		\draw[rotate=0] (19,5.5) ellipse (2.5mm and 8mm);
 		\draw (19,0) node {B$(T)$-tree};
 		\draw[fill=black] (17.5,5) circle (2.5pt);
 		\draw[fill=black] (17,5.5) circle (2.5pt);
 		\draw[fill=black] (17.5,5.5) circle (2.5pt);
 		\draw[fill=black] (18,5.5) circle (2.5pt);
 		\draw[fill=black] (19.5,5.5) circle (2.5pt);
 		\draw[fill=black] (19.5,5) circle (2.5pt);
 		\draw[fill=black] (20,5) circle (2.5pt);
 		\draw[fill=black] (19.5,4.5) circle (2.5pt);
 		\draw[fill=black] (20,4.5) circle (2.5pt);
 		\draw[fill=black] (15.5,4) circle (2.5pt);
 		\draw[fill=black] (15.5,4.5) circle (2.5pt);
 		\draw[fill=black] (15.5,5) circle (2.5pt);
 		\draw[fill=black] (15,4) circle (2.5pt);
 		\draw[fill=black] (16.75,1.5) circle (2.5pt);
 		\draw[fill=black] (17.25,1.5) circle (2.5pt);
 		\draw[fill=black] (17.25,1) circle (2.5pt);
 		\draw[fill=black] (18.5,2) circle (2.5pt);
 		\draw[fill=black] (18.25,1) circle (2.5pt);
 		\draw[fill=black] (18.25,1.5) circle (2.5pt);
 		\draw[fill=black] (18.75,1.5) circle (2.5pt);
 		\draw[fill=black] (19.5,3) circle (2.5pt);
 		\draw[fill=black] (20,3) circle (2.5pt);
 		\draw[fill=black] (20.5,3) circle (2.5pt);
 		\draw[fill=black] (20.5,1) circle (2.5pt);
 		\draw[fill=black] (21,1) circle (2.5pt);
 		\draw[fill=black] (22,1.75) circle (2.5pt);
 		\draw[fill=black] (22,2.25) circle (2.5pt);
 		\draw[fill=black] (22.5,2.5) circle (2.5pt);
 		\draw[fill=black] (22.5,2) circle (2.5pt);
 		\draw[thick] (17.5,4.5) -- (17.5,5) -- (17.5,5.5);
 		\draw[thick] (17,5.5) -- (17.5,5) -- (18,5.5);
 		\draw[thick] (19,5.5) -- (19.5,5.5);
 		\draw[thick] (19,5) -- (19.5,5) -- (20,5); 
 		\draw[thick] (19,5) -- (19.5,4.5) -- (20,4.5); 
 		\draw[thick] (15.5,4.5) -- (16,4.75) -- (15.5,5);	
 		\draw[thick] (16,4.25) -- (15.5,4) -- (15,4);
 		\draw[thick] (16.75,1.5) -- (17,2) -- (17.25,1.5) -- (17.25,1);
 		\draw[thick] (18.25,1) -- (18.25,1.5) -- (18.5,2) -- (18.75,1.5);
 		\draw[thick] (18.5,2) -- (18.5,2.5);
 		\draw[thick] (19.5,3) -- (20,2.5) -- (20,3);
 		\draw[thick] (20,2.5) -- (20.5,3);
 		\draw[thick] (20,1.5) -- (20.5,1) -- (21,1);
 		\draw[thick] (21.5,1.75) -- (22,1.75);
 		\draw[thick] (22.5,2.5) -- (22,2.25) -- (22.5,2);
 		\draw[thick] (21.5,2.25) -- (22,2.25);
 		\end{tikzpicture} 
 		\caption{Examples of a tree $T$, a blow-up B$(T)$ and a B$(T)$-tree.} \label{figure:Btree new}
 	\end{figure}
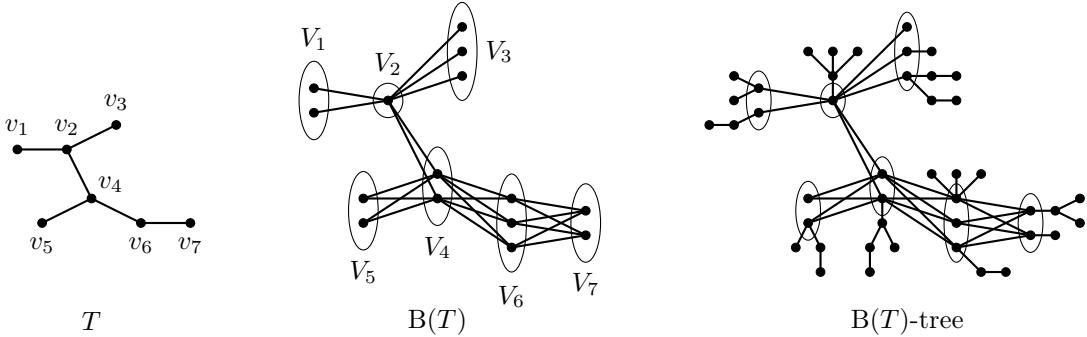

In this paper, we partially confirm Conjecture~\ref{conj} as follows.

\begin{thm} \label{thm:mainthm}
	Alice wins the game on every weighted even B$(T)$-tree, where $T$ is a tree.
\end{thm}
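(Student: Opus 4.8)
The plan is to run the set-rooted induction of Egawa, Enomoto and Matsumoto~\cite{Egawa:Kmntree}, but to induct on the tree $T$ that is blown up rather than on a single complete bipartite core. First I would dispose of the base case for free. If $T$ is a star (equivalently, $T$ has diameter at most $2$), then B$(T)$ is complete bipartite: placing the centre blob $V_r$ on one side and the union of the leaf blobs on the other gives exactly $K_{|V_r|,\,N-|V_r|}$, where $N$ is the number of vertices of B$(T)$, since distinct leaf blobs are pairwise non-adjacent. Hence a B$(T)$-tree with $T$ a star is a $K_{m,n}$-tree and Alice wins by~\cite{Egawa:Kmntree}. This also pinpoints the genuinely new content: as soon as $T$ has diameter at least $3$ (already for $T=P_4$, where the blobs $V_1$ and $V_4$ are non-adjacent) the graph B$(T)$ is \emph{not} complete bipartite, so no direct reduction to a $K_{m,n}$-tree is available and a real induction on $T$ is needed.

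The heart of the argument is a rooted strengthening, proved by induction on $|V(T)|$. Generalising the way~\cite{Egawa:Kmntree} roots one side of the complete bipartite core, I would designate a single blob $V_r$ of B$(T)$ as the root set and prove, in the set-rooted sense of~\cite{Egawa:Kmntree}, that the appropriate player can secure at least half of the total weight of a weighted even B$(T)$-tree rooted at $V_r$. Rooting an entire blob, rather than a single vertex, is the natural generalisation: it is the blobs, not individual vertices, that are glued together along the edges of $T$, and it specialises to rooting one side of $K_{m,n}$ when $T=P_2$.

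For the inductive step I would peel off a pendant blob. Choose a leaf $v_\ell$ of $T$ with neighbour $v_p$, put $T'=T-v_\ell$, and view the given B$(T)$-tree $G$ as the smaller B$(T')$-tree $G'$ together with the blob $V_\ell$ and the pendant trees hanging from $V_\ell$, all completely joined to $V_p$. The local piece consisting of $V_\ell$, $V_p$ and the pendant trees on the $V_\ell$-side is itself a $K_{|V_\ell|,|V_p|}$-tree, and it meets $G'$ in exactly the blob $V_p$. I would root both $G'$ and the local piece at $V_p$, apply~\cite{Egawa:Kmntree} to the local $K_{m,n}$-tree and the induction hypothesis to $G'$, and then interleave the two resulting strategies: whenever Bob grabs in one piece, Alice answers in the same piece, with the shared root blob $V_p$ acting as the buffer that reconciles the turn order and the parities of the two subgames.

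The hard part will be making this interleaving simultaneously legal and exact, where three issues are entangled. First, cut vertices: a vertex of $V_p$ is a cut vertex of $G$ exactly when grabbing it would detach $V_\ell$ or a pendant subtree, so the legal moves of the combined game are not merely the union of the legal moves of the two pieces, and the set-rooted game must be arranged so that the root blob $V_p$ is precisely the part whose legality it controls. Second, parity: the peeled part may have either parity, whereas the half-guarantee is an even-graph statement, and it is the root set $V_p$ that is introduced to absorb this discrepancy. Third, synchronisation: one must ensure that Alice's reply always falls in the piece where Bob has just moved, and never runs out in one piece while the other still owes a response. Getting the set-rooted definition right, so that gluing two rooted B$(T)$-trees along a shared blob preserves the half-guarantee, is what I expect to be the main obstacle — exactly as the analogous set-rooted bookkeeping was the technical core of the $K_{m,n}$-tree proof in~\cite{Egawa:Kmntree}.
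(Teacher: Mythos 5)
Your base-case observation is correct and pleasant: if $T$ is a star then B$(T)$ is complete bipartite, so a B$(T)$-tree is a $K_{m,n}$-tree and the result of Egawa--Enomoto--Matsumoto applies directly. But the inductive machinery you propose has two genuine gaps. First, the rooted strengthening you plan to prove is, in its natural reading, \emph{false}. In the game rooted at a set $S$, the vertices of $S$ are exactly the ones a player may be forced to defer, so the first player need not secure half the weight even on an even graph: already on $P_2$ with both vertices forming singleton blobs, all weight on the root vertex $u$ and root set $\{u\}$, Alice's only feasible move is the weight-zero vertex and she loses the rooted game. What \cite{Egawa:Kmntree} proves (and what must be proved here) is not ``the rooted first player gets half'' but a comparison between rooted and unrooted scores of the \emph{same} player, of the form $R(G,S,-2)\le N(G,-2)$ and, for even $G$, $R(G,v,1)\ge N(G,2)$; these two inequalities sandwich to give $N(G,2)\le R(G,v,2)\le N(G,1)$, which is what makes Alice win the \emph{unrooted} game. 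Your plan never supplies this bridge back to the unrooted game that Alice actually plays, and that bridge --- a case analysis on optimal moves combined with the score-shift observations for feasible/optimal moves --- is the real content of the proof, not an afterthought.

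Second, rooting at a single blob $V_r$ is too weak an induction hypothesis, and your decomposition cannot support the strategy-pasting you describe. When the grabbed vertex $v$ is the unique vertex of its blob $V_i$ and is not a leaf, the root-transfer identity $R(G,v,-2)=R(G-v,N_G(v),-1)$ replaces the root $\{v\}$ by $N_G(v)=N_H(V_i)$, which is a \emph{union of several blobs}; so the induction must carry rooted statements for three kinds of root sets simultaneously --- single vertices, blobs $V_i$, and neighborhoods $N_H(V_i)$ --- and with blob roots alone it does not close. (This is precisely why the paper's two main lemmas each have three parts, proved jointly by induction on $|V(G)|$, with vertex removal rather than blob peeling: game moves delete one vertex at a time, and $G-v$ stays a blow-up-of-a-tree-tree whenever it is connected.) Moreover, your two pieces overlap in $V_p$: interleaving a strategy for $G'$ rooted at $V_p$ with a strategy for the local $K_{m,n}$-tree rooted at $V_p$ is not well-defined, since each vertex of $V_p$ would have to be claimed in both subgames but can be claimed only once in $G$. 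The gluing argument that does work (the paper's Lemma~\ref{lem:gameB}) requires the two pieces to partition $V(G)$, with the interface root sets $U_1\subseteq V(G_1)$, $U_2\subseteq V(G_2)$ completely joined; feasibility of the pasted strategy is checked through that complete join, and none of your three named difficulties (cut vertices, parity, synchronisation) is resolved by the overlapping decomposition --- they are resolved in the paper exactly by making the pieces disjoint and by keeping all three root-set types in the induction.
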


\begin{cor} \label{cor:blowupcycle}
	Alice wins the game on every weighted even B$(C_{n})$.
\end{cor}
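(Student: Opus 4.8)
The plan is to derive the corollary from Theorem~\ref{thm:mainthm} by collapsing the cyclic backbone of B$(C_n)$ down to a path. Write $V_1,\dots,V_n$ for the blow-up classes (call them the \emph{columns}). The first thing I would record are the two structural facts that drive everything: as long as all $n$ columns are nonempty the graph B$(C_n)$ is $2$-connected, so every remaining vertex is a non-cut vertex and hence playable; and the instant a single column is emptied the cycle opens and the remaining graph becomes a blow-up of a \emph{path}, i.e. a B$(P)$-tree with $P$ a path. Since a path is a tree and a blow-up of a path is bipartite regardless of the parity of $n$, Theorem~\ref{thm:mainthm} applies to whatever graph remains after the opening. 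Thus the game on B$(C_n)$ splits into a \emph{cyclic phase}, in which both players move inside the (still nonempty) columns, followed by a \emph{path phase} governed by the known result. The base case is clean: B$(C_4)$ is the complete bipartite graph with parts $V_1\cup V_3$ and $V_2\cup V_4$, hence a $K_{m,n}$-tree, already covered by \cite{Egawa:Kmntree}.

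The heart of the matter is the \emph{turn parity} at the moment the cycle first opens. Since B$(C_n)$ is even, after $t$ moves the remaining graph is even exactly when $t$ is even, i.e. exactly when it is Alice's turn; so if the opening occurs immediately before one of Alice's moves, she inherits an \emph{even} B$(P)$-tree and, by Theorem~\ref{thm:mainthm}, can secure at least half of the \emph{remaining} weight. It then only remains to check that her takings during the cyclic phase keep her running total at least half of the total weight, which I would guarantee by having her play a pairing strategy inside the columns (responding to each of Bob's column moves with a claim in the same column while one remains), so that she collects at least as much as Bob column by column. The useful combinatorial observation here is that a player can \emph{avoid} opening the cycle precisely while some column still has size at least $2$; hence the opening is forced only from the bare cycle $C_n$, after exactly $N-n$ column moves, where $N$ is the order of B$(C_n)$. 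A parity analysis of $N-n$ then pins down who is compelled to open: when $N-n$ is odd the forced opening falls on Bob's (even-numbered) move, leaving Alice an even path-blow-up, which is the favourable configuration.

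The step I expect to be the main obstacle is exactly this timing, and it is not symmetric between the two parities of $n$: Alice cannot simply decree that the cycle opens on Bob's turn, and Bob will try either to stall in large columns or to open on an unfavourable parity while simultaneously hoarding weight, so the parity and the weight bookkeeping must be controlled together. To handle both at once I would lean on the \emph{set-rooted} strengthening of the game that Egawa, Enomoto and Matsumoto introduced to prove Theorem~\ref{thm:mainthm}: after the cycle is opened I treat the two end-columns that had been joined in B$(C_n)$ as a rooted set, so that the ``closing'' edges of the cycle are absorbed into the rooted inductive statement rather than left as extra structure, and I would run the argument by induction on $n$ with the complete-bipartite case B$(C_4)$ as base and a reduction from B$(C_n)$ to a smaller blow-up each time a column is emptied. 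I expect the bulk of the real work to lie in formulating the rooted invariant precisely enough that it survives the opening move in every parity case; once that invariant is in place, each case should close by a single application of the (set-rooted) main theorem together with the fair-by-column guarantee of Alice's pairing.
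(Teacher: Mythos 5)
Your proposal has two genuine gaps, and they sit exactly where the proof has to do its work.

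First, the ``fair-by-column'' pairing claim is false as stated. If Bob moves first into a column and grabs its heaviest vertex, your response inside the same column nets you \emph{at most} what he just took, not at least; so the guarantee ``she collects at least as much as Bob column by column'' is backwards, and the weight bookkeeping for the cyclic phase collapses. Second, the timing problem that you yourself flag as the main obstacle is never resolved: Bob may open the cycle voluntarily at a parity of his choosing, and when $n$ is even the \emph{forced} opening at the bare cycle $C_n$ falls on Alice's move, which is precisely the unfavourable case. The ``rooted invariant'' that is supposed to absorb both difficulties is never formulated --- but formulating it \emph{is} the missing proof, so the argument does not close. Two further structural points: your induction on $n$ with base B$(C_4)$ does not match your own reduction (emptying a column yields a blow-up of a \emph{path}, handled by Theorem~\ref{thm:mainthm}, never a smaller cycle blow-up), and the corollary covers odd $n$ as well, where B$(C_n)$ is not bipartite and the $K_{m,n}$-tree base case is unavailable.

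The paper's proof sidesteps all of this with one greedy move plus induction on the number of vertices rather than on $n$. Since every vertex of an even blow-up of a cycle is a non-cut vertex, Alice may open by claiming a maximum-weight vertex $a$. Whatever feasible reply $b$ Bob makes, the graph $G-\{a,b\}$ is connected and is an even blow-up of either a path or a cycle: in the first case Alice wins the residual game by Theorem~\ref{thm:mainthm}, in the second by the induction hypothesis, and since $w(a)\geq w(b)$ her total is at least half of $w(G)$. Each round of two moves preserves evenness and hands the first move back to Alice, so no analysis of who opens the cycle, and no rooted strengthening, is ever needed. If you want to salvage your approach, the one idea to import is that greedy first move: it is what compensates for Bob's best single reply and lets a clean recursion replace the phase-and-parity bookkeeping.
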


	For a graph $G$ and a set $S\subseteq V(G)$, let $N_G(S)$ denote the neighborhood of $S$, i.e. the set of vertices having a neighbor in $S$. The proof is based on the method of Egawa, Enomoto and Matsumoto, where their main lemmas dealt with the score of the game on a $K_{m,n}$-tree rooted at a partite class. We generalize their method by considering instead the scores of the game on a $H$-tree rooted at $V_i$ and  the game on $H$-tree rooted at $N_H(V_i)$, where $H$ is a blow-up of a tree. 

	The rest of this paper is organized as follows. In Section~\ref{scetion-Observations}, we recall some observations and a lemma on $K_{m,n}$-trees given by Egawa, Enomoto and Matsumoto. Section~\ref{scetion-The Proofs} is devoted to proving Theorem~\ref{thm:mainthm} and then applying it to prove Corollary~\ref{cor:blowupcycle}. In Section~\ref{section-conclusion}, we give some concluding remarks.

\section{Preliminaries} \label{scetion-Observations}
	
	In this section, we prepare some observations and a lemma on $K_{m,n}$-trees which will be useful for the proof of Theorem~\ref{thm:mainthm}.

	We first give definitions of a rooted version of the graph grabbing game and some related terms introduced by Egawa, Enomoto and  Matsumoto. For a weighted graph $G$, a \emph{root set} $S$ of $G$ is a set of vertices intersecting every component of $G$ and the \emph{game on $G$ rooted at $S$} is a graph grabbing game, where each player needs not claim a non-cut vertex, but instead they claim a vertex $v$ such that every component of $G-v$ contains at least one vertex in $S$. Therefore, a move $v$ in the game on $G$ is \emph{feasible} if $G-v$ is connected, and a move $v$ in the game on $G$ rooted at $S$ is \emph{feasible} if every component of $G-v$ contains at least one vertex in $S$. A move $v$ in the game on $G$ (rooted at $S$) is \emph{optimal} if there is an optimal strategy in the game on $G$ (rooted at $S$) having $v$ as the first move. The first (resp. second) player is called \emph{Player $1$} (resp. \emph{Player $2$}). The last (resp. second from last) player is called \emph{Player $-1$} (resp. \emph{Player $-2$}). For $k\in\{1,2,-1,-2\}$, assuming that both players play optimally, let $N(G,k)$ denote the score of Player $k$ in the game on $G$ and let $R(G,S,k)$ denote the score of Player $k$ in the game on $G$ rooted at $S$ and we write $R(G,v,k)$ for $R(G,\{v\},k)$. For a set $S$ and an element $x$, we write $S-x$ for $S\setminus\{x\}$.
	
	Egawa, Enomoto and Matsumoto observed some relationships between the scores of both players in the normal version and the rooted version of the game. Note that the equation/inequality in the brackets in each observation is an equivalent form of the first one due to the fact that, assuming that both players play optimally, the sum of their scores equals to the total weight of the graph.
	
\begin{obs}[\cite{Egawa:Kmntree}] \label{obs1} 
	If $x$ is a feasible move in the game on $G$, then
	\begin{center}
		$N(G,2)\leq N(G-x,1)$ \quad $(\Leftrightarrow N(G,1)\geq N(G-x,2)+w(x))$.
	\end{center}
	If $x$ is an optimal move in the game on $G$, then 	
	\begin{center}
		$N(G,2)= N(G-x,1)$ \quad $(\Leftrightarrow N(G,1) = N(G-x,2)+w(x))$.
	\end{center}
\end{obs}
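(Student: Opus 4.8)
The plan is to unfold a single round of play and exploit the fact that, once a first move has been made, the game on $G$ reduces to the game on $G-x$ with the two players' roles interchanged. Concretely, suppose Player $1$ opens with a feasible move $x$, so that $G-x$ is connected and play legitimately continues there. In the remaining game on $G-x$ it is now Player $2$ who moves first; hence in that subgame Player $2$ occupies the seat of Player $1$ and Player $1$ occupies the seat of Player $2$. Assuming optimal play from both, Player $1$ therefore collects $w(x)+N(G-x,2)$ in total, while Player $2$ collects $N(G-x,1)$.

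First I would establish the bracketed inequality. Since $N(G,1)$ is by definition the value Player $1$ secures under optimal play, and since opening with $x$ is one available line (as $x$ is feasible), Player $1$ can guarantee at least the outcome of that particular line, giving
\begin{equation*}
N(G,1)\ \ge\ w(x)+N(G-x,2).
\end{equation*}
If moreover $x$ is an optimal first move, then by the very definition of an optimal move this line realizes the optimum, so the inequality tightens to the equality $N(G,1)=w(x)+N(G-x,2)$.

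To pass to the first form, I would invoke the conservation of weight recalled just before the statement: under optimal play the two scores sum to the total weight, so $N(G,1)+N(G,2)=\sum_{v\in V(G)}w(v)$ and likewise $N(G-x,1)+N(G-x,2)=\sum_{v\in V(G)}w(v)-w(x)$. Subtracting the displayed inequality from the first identity and simplifying using the second yields $N(G,2)\le N(G-x,1)$, again with equality exactly when $x$ is optimal. I do not expect any genuine obstacle here; the only care required is in the bookkeeping of which player sits in which seat after the role swap, together with the weight-conservation identity that converts the additive $+w(x)$ form into the clean comparison $N(G,2)\le N(G-x,1)$.
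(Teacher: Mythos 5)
Your proof is correct. Note that the paper itself does not prove this observation at all --- it is imported verbatim from Egawa, Enomoto and Matsumoto \cite{Egawa:Kmntree} --- so there is no in-paper argument to compare against; your one-round unfolding (Player $1$ opens with $x$, the residual game on $G-x$ is the same game with the seats swapped, then convert between the two displayed forms via weight conservation) is exactly the standard justification such a statement rests on. The only step you leave implicit is that $N(G-x,2)$ is not merely the score under mutually optimal play but a \emph{guarantee} for the second player regardless of the opponent's play; this is the usual backward-induction fact for finite perfect-information constant-sum games, and it is also what upgrades the inequality to an equality when $x$ is optimal (the opponent, by playing optimally in $G-x$ as first player, caps Player $1$ at $w(x)+N(G-x,2)$, matching the lower bound).
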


\begin{obs}[\cite{Egawa:Kmntree}] \label{obs2}
	Let $S$ be a root set of $G$. If $x$ is a feasible move in the game on $G$ rooted at $S$, then
	\begin{center}
		$R(G,S,2)\leq R(G-x,S-x,1)$ \quad $(\Leftrightarrow R(G,S,1)\geq R(G-x,S-x,2)+w(x))$.
	\end{center}
	If $x$ is an optimal move in the game on $G$ rooted at $S$, then 
	\begin{center}
		$R(G,S,2)= R(G-x,S-x,1)$ \quad $(\Leftrightarrow R(G,S,1)= R(G-x,S-x,2)+w(x))$.
	\end{center}
\end{obs}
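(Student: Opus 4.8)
The plan is to follow the template of Observation~\ref{obs1}, but now carrying the root set through one round of play. The first thing I would establish is that the rooted subgame obtained after the opening move is well defined. Suppose $x$ is a feasible move in the game on $G$ rooted at $S$, so that every component of $G-x$ contains a vertex of $S$. Since $x\notin V(G-x)$, any such vertex of $S$ lying in a component of $G-x$ automatically belongs to $S-x$; hence $S-x$ meets every component of $G-x$ and is a root set of $G-x$. This holds regardless of whether $x\in S$, so the game on $G-x$ rooted at $S-x$ is legitimate and ``optimal play'' in it is meaningful.

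Next I would analyze a single round. Suppose Player~$1$ opens with the feasible move $x$, collecting $w(x)$. The remaining positions form the game on $G-x$ rooted at $S-x$, but with the original Player~$2$ now moving first; that is, in the subgame the original Player~$2$ plays the role of the first player and the original Player~$1$ plays the role of the second player. Assuming optimal play thereafter, the original Player~$2$ therefore collects exactly $R(G-x,S-x,1)$. Because $R(G,S,2)$ is by definition the best the second player can secure while the first player plays to maximize her own score (equivalently, to minimize her opponent's), opening with \emph{any} feasible $x$ yields $R(G,S,2)\le R(G-x,S-x,1)$, and when $x$ is an optimal first move this bound is attained, giving $R(G,S,2)=R(G-x,S-x,1)$. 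Read from Player~$1$'s side, the same round says that playing $x$ secures her $w(x)+R(G-x,S-x,2)$, so maximizing over feasible openings gives $R(G,S,1)\ge w(x)+R(G-x,S-x,2)$, with equality for optimal $x$.

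Finally, I would record that the two bracketed forms in each part are interchangeable via the conservation identity $R(G,S,1)+R(G,S,2)=w(V(G))$, together with $R(G-x,S-x,1)+R(G-x,S-x,2)=w(V(G))-w(x)$; these hold because under optimal play every vertex is eventually claimed by one of the two players, as noted in the remark preceding the observations. Substituting one identity into the other converts the inequality on $R(\cdot,\cdot,2)$ into the stated inequality on $R(\cdot,\cdot,1)$, and likewise turns the equality into its equivalent form.

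I do not expect a serious obstacle: the content is just the recursive structure of an alternating two-player game, and the arithmetic is immediate once the recursion is set up. The only genuine subtlety, and the step I would be most careful about, is the first one: verifying that the root-set condition is preserved for every feasible $x$ (including the case $x\in S$), since this is precisely what guarantees that the subgame on $G-x$ rooted at $S-x$ is a valid rooted game and that the role swap across the opening move is the correct bookkeeping.
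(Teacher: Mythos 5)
Your proof is correct: the observation is exactly the one-round game recursion (with the check that $S-x$ remains a root set of $G-x$, and the weight-conservation identity converting between the two bracketed forms), and the paper itself states Observation~\ref{obs2} without proof, importing it from the cited work of Egawa, Enomoto and Matsumoto. Your argument is the standard one underlying that citation, so there is nothing substantive to compare against in the paper; the only point worth flagging is that your opening bound $R(G,S,2)\leq R(G-x,S-x,1)$ already implicitly uses the constant-sum property you only justify at the end, so logically the cleanest order is to prove $R(G,S,1)\geq w(x)+R(G-x,S-x,2)$ first and then deduce the other form by conservation.
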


\begin{obs}[\cite{Egawa:Kmntree}] \label{obs3}
	If $v$ is a root of $G$, then
	\begin{center}
		$R(G,v,-2)=R(G-v,N_G(v),-1)$ \quad $(\Leftrightarrow R(G,v,-1)=R(G-v,N_G(v),-2)+w(v))$.
	\end{center}	
\end{obs}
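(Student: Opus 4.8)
The plan is to show that the game on $G$ rooted at $\{v\}$ coincides, move for move on the non-root vertices, with the game on $G-v$ rooted at $N_G(v)$, the only difference being that the former runs exactly one move longer because it must finish by grabbing $v$; tracking this one-move offset is precisely what converts the last player of the shorter game into the second-from-last player of the longer one. First I would check that in the game on $G$ rooted at $\{v\}$ the root $v$ is forced to be claimed last. At any stage with remaining vertices $W\cup\{v\}$ and $W\neq\emptyset$, claiming $v$ would leave each component of $G[W]$ without a root and is therefore infeasible; hence $v$ survives until it is the only vertex left and is collected on the final move, that is by Player $-1$. Its weight $w(v)$ is thus a fixed bonus, received by whoever moves last no matter how the earlier moves are played.

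Next I would verify that the feasible moves on $V(G)\setminus\{v\}$ are the same in both games. For remaining non-root vertices $W$ and a candidate $u\in W$, the move $u$ is feasible in the game on $G$ rooted at $\{v\}$ exactly when $G[(W-u)\cup\{v\}]$ is connected; since $v$ joins together precisely those components of $G[W-u]$ that meet $N_G(v)$, this holds if and only if every component of $G[W-u]$ contains a vertex of $N_G(v)$. But that is exactly the feasibility condition for $u$ in the game on $G-v$ rooted at $N_G(v)$ (the reduced root set changes nothing, as the relevant roots already lie in $G[W-u]$). The two games therefore admit the same legal sequences of non-root moves.

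It remains to combine these facts and line up the player roles. Because $w(v)$ is a constant added to the final player's total, it alters no player's preferences among the non-root sequences, so the optimal play on $V(G)\setminus\{v\}$ is identical in both games. Both games start with the same first player, so the $k$-th move is made by the same person in each. In the game on $G$, of length $n=|V(G)|$, the last move grabs $v$ (Player $-1$) and move $n-1$ belongs to Player $-2$; in the game on $G-v$, of length $n-1$, move $n-1$ is the final move and hence belongs to Player $-1$. Thus Player $-2$ of the first game and Player $-1$ of the second collect exactly the same vertices, namely those played at positions $n-1,n-3,\dots$, none of which is $v$, giving $R(G,v,-2)=R(G-v,N_G(v),-1)$. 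The bracketed form then follows from the fact that optimal scores sum to the total weight: $R(G,v,-1)=w(G)-R(G,v,-2)=w(G)-R(G-v,N_G(v),-1)=R(G-v,N_G(v),-2)+w(v)$, using that scores sum to $w(G-v)$ in the game on $G-v$ together with $w(G)=w(G-v)+w(v)$.

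The step I expect to require the most care is the claim that the fixed bonus $w(v)$ does not disturb the optimal non-root play, so that the two games' optimal sequences genuinely coincide and the positionwise comparison above is legitimate; I would make this precise by a short backward induction on the number of remaining vertices, observing that augmenting one fixed terminal payoff by a constant leaves every player's argmax unchanged at each decision node.
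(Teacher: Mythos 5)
Your proof is correct. Note that the paper itself does not prove this statement: it is recalled as Observation~\ref{obs3} and cited directly from Egawa, Enomoto and Matsumoto~\cite{Egawa:Kmntree}, so there is no in-paper argument to compare against. Your argument supplies exactly the justification that makes this an ``observation'': the root $v$ can only be grabbed when it is the last remaining vertex, a non-root move $u$ is feasible in the game on $G$ rooted at $\{v\}$ if and only if every component of the remaining graph minus $u$ meets $N_G(v)$, which is precisely feasibility in the game on $G-v$ rooted at $N_G(v)$, and the terminal bonus $w(v)$ goes to the last mover of the longer game regardless of play, so it cannot affect any optimal decision; the parity bookkeeping then identifies Player $-2$ of the game on $G$ with Player $-1$ of the game on $G-v$. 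You also correctly flag the one step needing care (invariance of optimal play under adding a fixed constant to one player's terminal payoff) and the backward-induction argument you sketch settles it, since the identity of the last mover is determined by $|V(G)|$ alone and not by the course of play.
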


The next lemma is a part of their main results which will help us in the proof.

\begin{lem}[\cite{Egawa:Kmntree}] \label{lemKmn}
	Let $G$ be a $K_{m,n}$-tree with partite classes $X,Y$ of size $m, n\geq1$, respectively. Then
	\begin{center}
		$R(G,Y,-2)\leq N(G,-2)$ \quad $(\Leftrightarrow R(G,Y,-1)\geq N(G,-1))$.
	\end{center}
\end{lem}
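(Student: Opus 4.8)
The plan is to prove the equivalent inequality $R(G,Y,-1)\ge N(G,-1)$ by strong induction on $|V(G)|$, comparing the last player's guaranteed score in the two games. Since the indices $-1,-2$ translate into $1,2$ according to the parity of $|V(G)|$, the statement unpacks into $R(G,Y,1)\le N(G,1)$ when $|V(G)|$ is even and $R(G,Y,2)\le N(G,2)$ when $|V(G)|$ is odd; I would carry both versions through the induction simultaneously, exploiting the fact that deleting one vertex preserves the from-the-end indices while flipping parity. The base case is $K_{1,1}$, a single edge $xy$ with $x\in X$ and $y\in Y$: rooting at $Y$ forces Player~$1$ to claim $x$ and leaves $y$ for the last player, so the inequality holds by direct inspection.

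For the inductive step I would peel off an optimal first move in each game and reduce to smaller instances via the Observations. Let $z$ be an optimal (non-cut) first move in the normal game; by Observation~\ref{obs1} the scores $N(G,\cdot)$ are controlled by $N(G-z,\cdot)$, and since $z$ is a non-cut vertex, $G-z$ is again a connected $K_{m,n}$-tree to which the induction hypothesis applies (degenerate single-vertex remnants being absorbed into the base case). In parallel, let $u$ be an optimal first move in the game rooted at $Y$; by Observation~\ref{obs2} the scores $R(G,Y,\cdot)$ are controlled by $R(G-u,Y-u,\cdot)$. The heart of the argument is then a case analysis on the location of $u$: whether $u\in X$ or $u\in Y$, and whether $u$ is a leaf of some $T_i$ or a core vertex of the underlying $K_{m,n}$. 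When $u$ is a non-cut vertex the reduction again lands on a smaller connected $K_{m,n}$-tree (or a tree, viewed as a $K_{1,1}$-tree), closing the induction against the bound obtained from $N(G-z,\cdot)$. Here Observation~\ref{obs3} is the natural bridge, since it rewrites a score rooted at a single vertex $v$ in terms of one rooted at the neighborhood $N_G(v)$; this is precisely the mechanism matching the identity $Y=N_{K_{m,n}}(X)$ on the core, and it is where the passage between the $-2$ and $-1$ indices is effected.

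The main obstacle is the case in which the optimal rooted move $u$ is a \emph{cut} vertex of $G$, which is a legal move in the rooted game but never in the normal game. Removing such a $u$ (for instance a core vertex on the $m$-side when $m\ge 2$) disconnects $G$ into a smaller $K_{m,n}$-tree together with one or more subtrees of $T_u$, so the rooted game continues on a \emph{disjoint union} of $K_{m,n}$-trees, a configuration the normal game cannot reach. To handle this I would strengthen the inductive statement to disjoint unions of $K_{m,n}$-trees rooted at the union of their $Y$-classes and prove a gluing inequality relating the combined rooted score to the connected normal score. The delicate point is the asymmetry that $N(G,\cdot)$ is defined only for connected $G$, so the two sides cannot be recursed into components in the same way: I would instead bound $N(G,-1)$ from below through a connectivity-preserving leaf reduction while bounding $R(G,Y,-1)$ from below through the possibly disconnecting optimal rooted move, and then reconcile the two bounds using the induction hypothesis on each connected piece. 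Verifying that this reconciliation is tight in the cut-vertex case, uniformly across both parities, is where I expect essentially all of the work to lie.
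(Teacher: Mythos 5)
You should first be aware that the paper contains no proof of Lemma~\ref{lemKmn}: it is quoted from Egawa, Enomoto and Matsumoto \cite{Egawa:Kmntree} and used as a black box. Indeed, the paper invokes it exactly at the point you single out as the main obstacle --- in Case~2.2 of the proof of Lemma~\ref{lem1.2}, when the optimal rooted move disconnects the graph, the authors show that $G$ must then be a $K_{m,n}$-tree and cite Lemma~\ref{lemKmn} rather than closing the induction themselves. So the fair comparison is with the method of \cite{Egawa:Kmntree}, of which the paper's Lemmas~\ref{lem1} and~\ref{lem2} are the blow-up generalization; your skeleton (induction on $|V(G)|$, the parity unpacking of the indices $-1,-2$, reductions via Observations~\ref{obs1}--\ref{obs3}, and a case split according to whether the optimal rooted move is feasible in the normal game) does match that method.

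However, the proposal has concrete gaps, and they sit precisely where the content of the lemma lies. First, your analysis of the disconnecting case is wrong: a core vertex $u$ on the $m$-side with $m\ge 2$ is never a feasible disconnecting move in the game rooted at $Y$, because the components of $G-u$ other than the one containing the core $K_{m-1,n}$ are subtrees of $T_u$ and contain no vertex of $Y$. Feasibility forces the only disconnecting case to be $m=1$ with $X=\{u\}$ and $T_u$ trivial, so that $G-u$ is a forest each of whose components is a tree containing exactly one vertex of $Y$; this special structure, which your sketch misdescribes, is what makes the case tractable at all. Second, the tool needed in that case is a strategy-combination lemma: one player plays optimally in several rooted games on the pieces simultaneously, answering in whichever piece the opponent just moved, with the parity bookkeeping carried by the from-the-end indices --- this is exactly what Lemma~\ref{lem:gameB} and Inequality~\ref{eq:gameA1} provide in the paper, and what the corresponding lemmas provide in \cite{Egawa:Kmntree}. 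You name the destination (``a gluing inequality'') but explicitly defer both its formulation and its proof, so the heart of the argument is missing. Third, the induction as you set it up cannot close: carrying the two parity forms of the root-$Y$ statement is not enough, because the argument also needs the statements rooted at a single vertex (already required when $|Y|=1$, which the lemma allows) and, crucially, the opposite-direction family for even graphs, $R(G,\cdot,1)\ge N(G,2)$ (the analogue of Lemma~\ref{lem2}), which enters whenever Observation~\ref{obs3} is used to convert a single-vertex root into a neighborhood root. These are genuinely different inequalities, not parity rephrasings of yours, and they must be proved simultaneously with it.
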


\section{The Proofs} \label{scetion-The Proofs}
	
	In this section, we start by proving Lemma~\ref{lem:gameB} which will be used repeatedly in the proof of our main lemmas, namely, Lemmas~\ref{lem1} and~\ref{lem2}. We then prove Theorem~\ref{thm:mainthm} by applying the main lemmas and deduce Corollary~\ref{cor:blowupcycle} from Theorem~\ref{thm:mainthm}.
	
	The following lemma shows the relationship between the scores of both players in the game on an even graph rooted at two different sets of some structure.
	
\begin{lem} \label{lem:gameB}
	Let $G_1$ and $G_2$ be subgraphs of an even graph $G$ such that $V(G_1)$, $V(G_2)$ partition $V(G)$. If $U_1=V(G_1)\cap N_G(V(G_2))$ and $U_2=V(G_2)\cap N_G(V(G_1))$ are root sets of $G_1$ and $G_2$, respectively, and every vertex in $U_1$ is joined to every vertex in $U_2$, then
\begin{enumerate}  [label={\thelem.\arabic*}]
	\item \label{lemB.1} 	$R(G,U_1,1)\geq R(G_1,U_1,-2)+R(G_2,U_2,-1)$.
	\item \label{lemB.2} $R(G,U_1,1)\geq R(G,U_2,2)$ 
\end{enumerate}	
\end{lem}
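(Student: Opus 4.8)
The plan is to prove the substantive inequality~\ref{lemB.1} by a combined-strategy (pairing) argument across the complete bipartite join, and then to deduce~\ref{lemB.2} from~\ref{lemB.1} by symmetry. Write $a=|V(G_1)|$ and $b=|V(G_2)|$; since $G$ is even, $a\equiv b\pmod 2$. Throughout I think of the game on $G$ rooted at $U_1$ as a superposition of two auxiliary games, $\mathcal A$, the game on $G_1$ rooted at $U_1$, and $\mathcal B$, the game on $G_2$ rooted at $U_2$, and I will exhibit a strategy for Player~$1$ guaranteeing her at least $R(G_1,U_1,-2)$ from the vertices she claims in $G_1$ and at least $R(G_2,U_2,-1)$ from those she claims in $G_2$.

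For~\ref{lemB.1} I would fix an optimal strategy for Player~$1$ in one auxiliary game and for Player~$2$ in the other, assigning the roles by parity. If $a,b$ are even, Player~$1$ opens on $G_1$ and thereafter answers every opponent move on the board it was played on, acting as Player~$1$ of $\mathcal A$ and as Player~$2$ of $\mathcal B$; if $a,b$ are odd she instead opens on $G_2$, acting as Player~$2$ of $\mathcal A$ and Player~$1$ of $\mathcal B$. A short parity count shows that in either case Player~$1$ realises the role of Player~$-2$ in $\mathcal A$ and of Player~$-1$ in $\mathcal B$ (for $a$ even, Player~$1$ of $\mathcal A$ is its Player~$-2$; for $a$ odd, Player~$2$ of $\mathcal A$ is its Player~$-2$; and symmetrically for $\mathcal B$ and Player~$-1$). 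Since on each board she follows an optimal strategy for her role while the opponent supplies the complementary moves, she collects at least $R(G_1,U_1,-2)$ on $G_1$ and at least $R(G_2,U_2,-1)$ on $G_2$, which is exactly~\ref{lemB.1}.

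The point that needs care, and which I expect to be the main obstacle, is that this pairing must only ever call for feasible moves in the actual game on $G$ rooted at $U_1$, whose root set lies entirely in $G_1$. Two structural facts make the superposition legitimate. First, because $U_1$ is a root set of $G_1$, a vertex of $U_1$ survives as long as $G_1$ is nonempty: no move may claim the last remaining vertex of $U_1$ unless it is the final move of the whole game, since otherwise some surviving vertex would be orphaned from the roots. Hence $G_1$ is forced to outlast $G_2$, and the global last move claims the last $U_1$-vertex. Second, because every vertex of $U_1$ is joined to every vertex of $U_2$, while a vertex of $U_1$ remains every component of the surviving part of $G_2$ meeting $U_2$ is anchored to the roots; consequently a move in $G_2$ is feasible on $G$ exactly when it is feasible in $\mathcal B$, and a move in $G_1$ (other than the orphaning final one) is feasible exactly when it is feasible in $\mathcal A$. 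The delicate bookkeeping is the re-synchronisation of the pairing when a board is exhausted, in particular in the odd case, where the reactive board $G_1$ has odd order and its last vertex (a root) is claimed by the opponent as the very last move; here one must check, using the two facts above, that Player~$1$ is never forced off her intended board before that point and that the role counts are unaffected.

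Finally I would deduce~\ref{lemB.2} from~\ref{lemB.1}. The hypotheses are symmetric under interchanging $(G_1,U_1)$ with $(G_2,U_2)$, so~\ref{lemB.1} also yields $R(G,U_2,1)\ge R(G_2,U_2,-2)+R(G_1,U_1,-1)$. Moreover, in any game the last two players are precisely Player~$1$ and Player~$2$, so $R(G_i,U_i,-1)+R(G_i,U_i,-2)=w(G_i)$ for $i=1,2$, while $R(G,U_2,1)+R(G,U_2,2)=w(G)$. Combining these,
\begin{align*}
R(G,U_1,1) &\ge R(G_1,U_1,-2)+R(G_2,U_2,-1) \\
&= w(G_2)+R(G_1,U_1,-2)-R(G_2,U_2,-2), \\
R(G,U_2,2) &= w(G)-R(G,U_2,1) \\
&\le w(G)-R(G_2,U_2,-2)-R(G_1,U_1,-1) \\
&= w(G_2)+R(G_1,U_1,-2)-R(G_2,U_2,-2),
\end{align*}
and the two right-hand sides coincide, giving $R(G,U_1,1)\ge R(G,U_2,2)$, which is~\ref{lemB.2}. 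The degenerate cases where some $G_i$ is a single vertex are handled by reading the score of a non-existent player as $0$, under which the identity $R(G_i,U_i,-1)+R(G_i,U_i,-2)=w(G_i)$ still holds.
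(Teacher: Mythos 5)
Your proposal is correct and takes essentially the same approach as the paper: the identical superposition strategy in which Player~1 acts as Player~$-2$ in the game on $G_1$ rooted at $U_1$ and as Player~$-1$ in the game on $G_2$ rooted at $U_2$, with the same parity bookkeeping, the same feasibility facts derived from the complete join between $U_1$ and $U_2$, and the same symmetry-plus-total-weight deduction of~\ref{lemB.2} from~\ref{lemB.1}. The re-synchronisation details you flag as delicate are precisely the ones the paper also leaves implicit, so there is no substantive difference between the two arguments.
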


\begin{proof} 
	\begin{figure}[H] \centering
		\begin{tikzpicture}[baseline=0.5ex,scale=0.75]	
		\draw[thick,line width=0.5pt] (6,0.7) -- (8,0.7);
		\draw[thick,line width=0.5pt] (6,3.3) -- (8,3.3);
		\draw[rounded corners=5pt] (7.5,0.5) rectangle (11,3.5);		
		\draw[rounded corners=5pt] (3,0.5) rectangle (6.5,3.5);
		\draw[fill=white] (6,2) ellipse (4mm and 13mm);
		\draw[fill=white] (8,2) ellipse (4mm and 13mm);
		\draw (5.25,2) node {$U_1$};
		\draw (8.75,2) node {$์U_2$};
		\draw (2.5,2) node {$G_1$};
		\draw (11.5,2) node {$G_2$};
		\end{tikzpicture} 
		\caption{The graph $G$ in Lemma~\ref{lem:gameB}.} \label{fig:gameB}
	\end{figure}
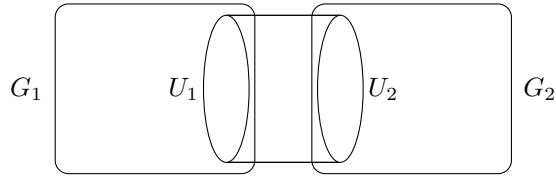	
	First, we shall prove Lemma~\ref{lemB.1} 
	by considering a strategy for Alice who plays first in the game on $G$ rooted at $U_1$. She plays optimally as Player~$-2$ in the game on $G_1$ rooted at $U_1$ and plays optimally as Player $-1$ in the game on $G_2$ rooted at $U_2$. Since $|V(G_1)|+|V(G_2)|$ is even, she plays as Player $1$ in one game and as Player $2$ in the other. Now, we check that Alice's moves are feasible in the game on $G$ rooted at $U_1$, and Bob's moves are feasible in the game on $G_1$ rooted at $U_1$ and the game on $G_2$ rooted at $U_2$. Indeed, after each move of Alice, every remaining component of $G_1$ and $G_2$ contains a vertex in $U_1$ and $U_2$, respectively. Together with the fact that every vertex in $U_2$ is joined to the remaining subset of $U_1$, we can conclude that every remaining component of $G$ contains a vertex in $U_1$. That is, her moves are feasible in the game on $G$ rooted at $U_1$. On the other hand, after each move of Bob, every remaining component of $G$ contains a vertex of $U_1$. Since the edges between $G_1$ and $G_2$ have endpoints only in $U_1$ and $U_2$, every remaining component of $G_1$ or $G_2$ contains a vertex in $U_1$ or $U_2$, respectively. That is, his moves are feasible in the game on $G_1$ rooted at $U_1$ and the game on $G_2$ rooted at $U_2$. Hence
	\begin{equation*} \label{eq:gameB1}
	R(G,U_1,1)\geq R(G_1,U_1,-2)+R(G_2,U_2,-1),
	\end{equation*}
	which completes the proof of Lemma~\ref{lemB.1}. 	
	By symmetry, we have
	\begin{equation*} \label{eq:gameB*}
	R(G,U_2,1)\geq R(G_1,U_1,-1)+R(G_2,U_2,-2),
	\end{equation*}
	which is equivalent to
	\begin{equation*}  \label{eq:gameB2}
	R(G,U_2,2)\leq R(G_1,U_1,-2)+R(G_2,U_2,-1),
	\end{equation*}
	by considering the total weight of $G, G_1$ and $G_2$. Together with Lemma~\ref{lemB.1}, we have	 
	\allowdisplaybreaks
	\begin{equation*}
		R(G,U_2,2)\leq R(G_1,U_1,-2)+R(G_2,U_2,-1)\leq R(G,U_1,1), 
	\end{equation*}
	which completes the proof of Lemma~\ref{lemB.2}.	 
\end{proof}

	We are now ready to prove the main lemmas which generalize the results on $K_{m,n}$-trees to B$(T)$-trees relating the scores of both players in the normal version and the rooted version of the game.
	 
\begin{lem} \label{lem1} Let $H$ be a blow-up graph of a tree with sets of vertices $V_1,\dots,V_k$ and let $G$ be a $H$-tree.
	\begin{enumerate}  [label={\thelem.\arabic*}]
		\item \label{lem1.1} For a vertex $v\in V(G)$, $R(G,v,-2)\leq N(G,-2)$ \quad $(\Leftrightarrow R(G,v,-1)\geq N(G,-1))$.
		\item \label{lem1.2} For each $i\in[k]$, $R(G,V_i,-2)\leq N(G,-2)$ \quad $(\Leftrightarrow R(G,V_i,-1)\geq N(G,-1))$.
		\item \label{lem1.3} For each $i\in[k]$, $R(G,N_H(V_i),-2)\leq N(G,-2)$ \quad $(\Leftrightarrow R(G,N_H(V_i),-1)\geq N(G,-1))$.
	\end{enumerate}
\end{lem}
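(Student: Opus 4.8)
The plan is to prove the three statements \ref{lem1.1}, \ref{lem1.2} and \ref{lem1.3} together by strong induction on $|V(G)|$; they must be carried simultaneously because the decomposition described below splits $G$ into pieces on which the induced root sets are again of the three forms $\{v\}$, $V_i$ and $N_H(V_i)$, so each part of the lemma feeds the others. The first simplification is a parity reduction. Since the second-from-last mover is, as a person, the first mover when $|V(G)|$ is even and the second mover when $|V(G)|$ is odd, each claim $R(G,S,-2)\le N(G,-2)$ is equivalent to $R(G,S,1)\le N(G,1)$ for even $G$ and to $R(G,S,2)\le N(G,2)$ for odd $G$. This is what makes Lemma~\ref{lem:gameB}, stated for even graphs and Players~$1$ and $2$, the natural engine in the even case; the odd case I plan to hand back to the even one by using Observations~\ref{obs1} and~\ref{obs2} to peel off a single move that is at once optimal in the normal game and feasible in the rooted game, leaving an even B$(T')$-tree governed by the induction hypothesis.

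For the inductive step itself I would decompose $G$ along a leaf of the underlying tree. Choosing a leaf $v_\ell$ of $T$ with neighbour $v_p$, I split $V(G)$ into the set $A$ of vertices lying over $V_\ell$ together with the trees pendant to them and its complement $B$, which is a B$(T-v_\ell)$-tree. Because $H$ is a blow-up of a tree, the only edges of $G$ crossing this cut form the complete bipartite graph between $U_1=V_p$ and $U_2=V_\ell$, so the hypotheses of Lemma~\ref{lem:gameB} hold with $G_1=B$ and $G_2=A$. Applying \ref{lemB.1} and \ref{lemB.2} then rewrites the rooted scores on $G$ in terms of $R(B,V_p,\cdot)$ and $R(A,V_\ell,\cdot)$: the piece $B$ is a strictly smaller B$(\text{tree})$-tree to which the induction hypothesis applies, while $A$—an independent set $V_\ell$ carrying pendant trees—I would handle by first stripping its pendant-tree leaves through a separate inductive reduction, after which $A$ is edgeless and its rooted scores are immediate. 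The base of the induction is $|V(T)|\le 2$, where $G$ is a $K_{m,n}$-tree and the required inequality is exactly Lemma~\ref{lemKmn}, with single-vertex graphs being trivial.

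The case analysis is then organised by where the root set $S$ sits relative to the chosen leaf. If $S$ lies wholly inside $B$, the induced root set on $B$ has the same form and the induction hypothesis closes the case once the contribution of $A$ is added in. If $S$ touches the cut, I would invoke Observation~\ref{obs3} to trade rooting at a vertex of $V_\ell$ for rooting at its neighbourhood $V_p$ on a smaller graph, which is precisely the device turning a statement of type~\ref{lem1.2} into one of type~\ref{lem1.3}. The genuinely awkward case is $S=N_H(V_i)$, which may straddle the cut—its vertices can include all of $V_p$ on the $B$-side while $V_i$ lies on the $A$-side—so that neither piece inherits $S$ cleanly; here I would choose $v_\ell$ so that $V_i$ is localised on one side (for instance taking $v_i$ itself to be the leaf, or a leaf maximally far from $v_i$) and then play the two inequalities from Lemma~\ref{lem:gameB} against the normal-game score.

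The hard part, I expect, will be the matching of directions. Lemma~\ref{lem:gameB} delivers a \emph{lower} bound on a rooted Player~$1$ score, whereas for even $G$ the lemma demands an \emph{upper} bound on that same rooted score (this is $R(G,S,-2)=R(G,S,1)$); bridging the two forces an independent \emph{lower} bound on the normal-game score $N(G,1)$ that reaches the same quantity $R(B,V_p,-2)+R(A,V_\ell,-1)$. To produce it I would argue that the combined sub-strategy underlying Lemma~\ref{lem:gameB} stays feasible in the \emph{normal} game on $G$, i.e. that the first player can keep $G$ connected throughout. The complete bipartite cut guarantees connectivity as long as one vertex of each of $U_1$ and $U_2$ survives, so the only real difficulty is at the endgame, when the last surviving root vertex of $V_p$ or $V_\ell$ is claimed and connectivity can momentarily fail. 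Controlling this boundary, together with the parity bookkeeping as one passes between $B$ and $A$ and the straddling case $S=N_H(V_i)$, is where I anticipate the real work to lie.
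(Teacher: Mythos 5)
Your leaf-of-$T$ decomposition is genuinely different from the paper's induction (which peels off one optimal move at a time using Observations~\ref{obs1} and~\ref{obs2}), but it has a gap exactly where you suspect, and the fix you sketch cannot work. Lemma~\ref{lem:gameB} only relates rooted scores to rooted scores, while Lemma~\ref{lem1} compares a rooted score with the normal score $N(G,\cdot)$; your proposed bridge --- that the combined strategy behind Lemma~\ref{lem:gameB} ``stays feasible in the normal game'' --- is false, and not merely delicate at the endgame. An optimal strategy in the game on $G_1$ rooted at $U_1$ is entitled to make moves that disconnect $G_1$ into several components each containing a vertex of $U_1$; while both sides are alive the complete bipartite cut does keep $G$ connected, but one side is necessarily exhausted before the game ends, and from that moment the remaining graph is exactly the (possibly disconnected) remnant of the other side, so the combined play is illegal in the normal game for the entire tail of the game, not ``momentarily.'' There is no way to patch this, because rooted-game strategies are strictly more permissive than connectivity-preserving ones --- that is the whole reason the rooted game was introduced. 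The paper instead gets the comparison with $N(G,\cdot)$ move by move: an optimal move in one game is checked to be feasible in the other, Observations~\ref{obs1} and~\ref{obs2} transfer the scores to a smaller $H$-tree, and the obstruction cases (the optimal rooted move $a$ disconnects $G$) are handled by showing $G$ is then a $K_{m,n}$-tree, so Lemma~\ref{lemKmn} applies (for~\ref{lem1.2}), or by a bespoke two-board strategy argument combined with Lemma~\ref{lemB.1}, Observation~\ref{obs3} and~\ref{lem1.2} (for~\ref{lem1.3}).

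A second structural gap: your induction carries only the three parts of Lemma~\ref{lem1}, but it cannot close without simultaneously carrying Lemma~\ref{lem2}, which for even $G$ is a different statement, not a parity-rewriting of Lemma~\ref{lem1} (compare $R(G,v,1)\le N(G,1)$ from~\ref{lem1.1} with $R(G,v,1)\ge N(G,2)$ from~\ref{lem2.1}). The need arises precisely where your odd-case reduction breaks: you propose to ``peel off a single move that is at once optimal in the normal game and feasible in the rooted game,'' but when the optimal normal move $b$ equals the root vertex $v$ itself, no such move exists. The paper handles this by Observation~\ref{obs3}, converting $R(G,v,-2)$ into $R(G-v,N_G(v),-1)$, and bounding that by $N(G-v,1)=N(G,-2)$ is exactly statement~\ref{lem2.1} or~\ref{lem2.3} on the smaller even graph $G-v$. (Your base case has the same flavour of problem: Lemma~\ref{lemKmn} only covers rooting at a partite class, not at a single vertex, so even for $K_{m,n}$-trees part~\ref{lem1.1} is not ``exactly'' that lemma.) So beyond repairing the rooted-versus-normal bridge, you would need to widen the induction to prove Lemmas~\ref{lem1} and~\ref{lem2} together, as the paper does.
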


\begin{lem} \label{lem2} Let $H$ be a blow-up graph of a tree with sets of vertices $V_1,\dots,V_k$ and let $G$ be an even $H$-tree.
	\begin{enumerate} 	[label={\thelem.\arabic*}]
		\item \label{lem2.1} For a vertex $v\in V(G),$ $R(G,v,1)\geq N(G,2)$ \quad $(\Leftrightarrow R(G,v,2)\leq N(G,1))$.
		\item \label{lem2.2} For each $i\in[k]$, $R(G,V_i,1)\geq N(G,2)$ \quad $(\Leftrightarrow R(G,V_i,2)\leq N(G,1))$.
		\item \label{lem2.3} For each $i\in[k]$, $R(G,N_H(V_i),1)\geq N(G,2)$ \quad $(\Leftrightarrow R(G,N_H(V_i),2)\leq N(G,1))$.
	\end{enumerate}
\end{lem}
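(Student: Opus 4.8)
The plan is to prove Lemmas~\ref{lem1} and~\ref{lem2} \emph{simultaneously by a single induction} on $|V(G)|$, since the three parts of each lemma feed into each other and the rooted-game inequalities for smaller graphs are exactly what is needed to bound the scores after the first move. I would order the six statements so that at each inductive step I may invoke any of them on a strictly smaller $H'$-tree (where $H'$ is again a blow-up of a tree, so the class is closed under the relevant vertex deletions). The base case is trivial: for a single vertex (or for very small graphs) all the scores coincide.

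\textbf{Deriving the coarse parts from the fine parts.} The heart of the matter is parts~\ref{lem1.3} and~\ref{lem2.3}, the statements rooted at $N_H(V_i)$; once these and the $V_i$-rooted statements are known, the single-vertex parts~\ref{lem1.1} and~\ref{lem2.1} should follow. Indeed, for a vertex $v\in V_i$, rooting at $\{v\}$ is more restrictive than rooting at $V_i$, so I expect a monotonicity inequality $R(G,v,-2)\le R(G,V_i,-2)$ (fewer feasible moves can only help Player~$-2$'s opponent), and then~\ref{lem1.2} gives $R(G,V_i,-2)\le N(G,-2)$, chaining to~\ref{lem1.1}; analogously for~\ref{lem2.1} via~\ref{lem2.2}. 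The bridge from~\ref{lem1} to~\ref{lem2} is the parity/duality already used in Lemma~\ref{lemKmn}: on an \emph{even} graph, Player~$-1$ and Player~$-2$ are Player~$2$ and Player~$1$ respectively, so $N(G,-2)=N(G,1)$ and the rooted statement $R(G,\cdot,-2)\le N(G,-2)$ translates into $R(G,\cdot,1)\ge N(G,2)$ after applying the appropriate move-peeling observation (Observations~\ref{obs2} and~\ref{obs3}). So the real content is establishing the $V_i$- and $N_H(V_i)$-rooted bounds in Lemma~\ref{lem1}.

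\textbf{The inductive step for the rooted bounds, using Lemma~\ref{lem:gameB}.} To prove $R(G,N_H(V_i),-2)\le N(G,-2)$ I would peel off Player~$-1$'s last move. By Observation~\ref{obs3}-type reasoning, $R(G,N_H(V_i),-2)$ relates to a rooted game on $G$ minus some vertex, and the key structural fact is that $N_H(V_i)$ together with $V_i$ realizes exactly the bipartite-join structure of Lemma~\ref{lem:gameB}: take $G_1,G_2$ to be the two sides obtained by splitting the tree $H$ at the edge (or the star) around $V_i$, so that $U_1=V(G_1)\cap N_G(V(G_2))$ and $U_2=V(G_2)\cap N_G(V(G_1))$ are among the blow-up classes and every vertex of $U_1$ is joined to every vertex of $U_2$. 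Then Lemma~\ref{lemB.2} gives $R(G,U_1,1)\ge R(G,U_2,2)$, and combined with the induction hypothesis applied to the smaller $H'$-trees $G_1$ and $G_2$ (which are again blow-ups of trees), I can compare the rooted score against $N(G,-2)$. The strategy here mirrors the split-and-play argument inside Lemma~\ref{lem:gameB}: Alice simulates optimal play on each side, and the all-to-all join between $U_1$ and $U_2$ guarantees feasibility is preserved across the cut in both directions.

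\textbf{The main obstacle.} The hard part will be handling the \emph{tree structure} of $H$ in the induction, specifically choosing the right edge at which to split $H$ into $G_1$ and $G_2$ and verifying that after an optimal first move the residual graph is still a blow-up of a (smaller) tree with the hypotheses of Lemma~\ref{lem:gameB} intact. Unlike the $K_{m,n}$ case of Egawa--Enomoto--Matsumoto, where there is a single complete-bipartite join, here a class $V_i$ may be adjacent to several other classes $V_j$ along the tree, so $N_H(V_i)$ need not be a single blow-up class and the clean bipartite-join picture of Lemma~\ref{lem:gameB} must be recovered by grouping the tree's branches at $V_i$ into the two sides of a cut. Making the feasibility bookkeeping precise when a deleted vertex lies in the attached tree $T_i$ rather than in the blow-up core, and ensuring the root set still meets every component, is where the most careful case analysis will be required; I would expect to treat separately the cases according to whether the optimal first move lies inside $V_i$, inside $N_H(V_i)$, or in one of the pendant trees, leaning on Observations~\ref{obs1}--\ref{obs3} to reduce each case to the induction hypothesis.
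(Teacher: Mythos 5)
Your high-level skeleton matches the paper's: a simultaneous induction on $|V(G)|$ for Lemmas~\ref{lem1} and~\ref{lem2}, heavy use of Observations~\ref{obs1}--\ref{obs3} to peel off moves, and the recognition that the complete join between $V_i$ and $N_H(V_i)$ is what makes Lemma~\ref{lem:gameB} applicable. However, two of your load-bearing steps are not valid as stated. First, your derivation of the single-vertex parts from the $V_i$-rooted parts rests on the monotonicity claim $R(G,v,-2)\le R(G,V_i,-2)$, justified by \enquote{fewer feasible moves can only help Player $-2$'s opponent.} Shrinking the root set restricts the feasible moves of \emph{both} players, so no such monotonicity principle is available, and neither this paper nor Egawa--Enomoto--Matsumoto proves or uses one. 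The paper's dependency runs in the opposite direction: part~\ref{lem1.1} is the workhorse, proven directly by induction (cases on the parity of $G$ and on whether the optimal move equals $v$, using Observation~\ref{obs3} together with parts~\ref{lem2.1} and~\ref{lem2.3} applied to $G-v$), and part~\ref{lem1.2} then reduces to~\ref{lem1.1} when $|V_i|=1$. Likewise~\ref{lem2.1} is handled by a dichotomy on whether a cut edge is incident to $v$ (cut-edge case via Lemma~\ref{lemB.2} plus~\ref{lem1.1}; otherwise Observation~\ref{obs3} plus~\ref{lem1.3} on $G-v$, or~\ref{lem2.2} when $|V_j|=1$), not by comparing root sets.

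Second, your \enquote{parity/duality bridge} from Lemma~\ref{lem1} to Lemma~\ref{lem2} is incorrect. On an even graph, Player $-2$ \emph{is} Player $1$, so~\ref{lem1.2} reads $R(G,V_i,1)\le N(G,1)$, an \emph{upper} bound on the rooted first player's score, whereas~\ref{lem2.2} asserts the complementary \emph{lower} bound $R(G,V_i,1)\ge N(G,2)$; the two inequalities sandwich $R(G,V_i,1)$ between $N(G,2)$ and $N(G,1)$, and neither implies the other. The actual bridge in the paper is your Lemma~\ref{lem:gameB} idea, but applied to $G$ itself rather than by induction on smaller pieces: for~\ref{lem2.3} one takes $G_1$ to be the union of components of $G-V_i$ meeting $N_H(V_i)$ and $G_2=G-G_1$, and chains
\begin{equation*}
N(G,2)=N(G,-1)\le R(G,V_i,-1)=R(G,V_i,2)\le R(G,N_H(V_i),1),
\end{equation*}
using~\ref{lem1.2} on $G$ (already established earlier in the same induction step) followed by Lemma~\ref{lemB.2}; part~\ref{lem2.2} is symmetric via~\ref{lem1.3}. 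Your plan of applying the induction hypothesis to the smaller $H'$-trees $G_1$ and $G_2$ cannot close this step, because nothing relates the games on $G_1,G_2$ back to $N(G,2)$ without~\ref{lem1.2} on $G$ itself. Finally, your outline omits the degenerate case in the proof of~\ref{lem1.2} where the only feasible rooted move $a$ disconnects $G$: there the structure collapses to a $K_{m,n}$-tree and the paper must invoke Lemma~\ref{lemKmn}, which your argument never uses.
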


	We prove Lemmas~\ref{lem1} and~\ref{lem2} simultaneously by induction on the number $n$ of vertices of $G$. It is easy to check that Lemmas~\ref{lem1} and~\ref{lem2} hold for $n\leq2$. Now, we let $n\geq3$ and suppose that Lemmas~\ref{lem1} and~\ref{lem2} hold for $|V(G)|<n$. We remark that the following fact will be used throughout the proofs: Let $G$ be a $H$-tree, where $H$ is a blow-up of a tree and let $v$ be a vertex in $G$. Then $G-v$ is a $H'$-tree, where $H'$ is a blow-up of some tree if and only if $G-v$ is connected.

\begin{proof}[Proof of Lemma~\ref{lem1.1}]  Let $v\in V(G)$. 
	
	\textbf{Case 1.} $G$ is even. 
	
	Let $a$ be an optimal move in the game on $G$ rooted at $v$. Therefore, $a\neq v$ and $a$ is feasible in the game on $G$. So $G-a$ is connected. Then \allowdisplaybreaks
	\begin{align*}
	R(G,v,-1=2)&=R(G-a,v,1=-1)     &(\text{Observation~\ref{obs2}})   \\
	&\geq N(G-a,-1=1)      &(\text{Lemma~\ref{lem1.1} by induction})   \\
	&\geq N(G,2=-1)        &(\text{Observation~\ref{obs1}}).
	\end{align*}
	
	\textbf{Case 2.} $G$ is odd. 
	
	Let $b$ be an optimal move in the game on $G$. So $G-b$ is connected.
	
	\textbf{Case 2.1.}  $b\neq v$. 
	
	Therefore, $b$ is a feasible move in the game on $G$ rooted at $v$. Then \allowdisplaybreaks
	\begin{align*}
	R(G,v,-2=2)&\leq R(G-b,v,1=-2)    &(\text{Observation~\ref{obs2}}) \\
	&\leq N(G-b,-2=1)   &(\text{Lemma~\ref{lem1.1} by induction}) \\
	&= N(G,2=-2)        &(\text{Observation~\ref{obs1}}).
	\end{align*}
	
	\textbf{Case 2.2.}  $b=v$ and $v$ is a leaf. 
	
	Let $u$ be the unique neighbor of $v$. Then  \allowdisplaybreaks
	\begin{align*}
	R(G,v,-2)&= R(G-v,u,-1=2)     &(\text{Observation~\ref{obs3}})    \\
	&\leq N(G-v,1)     &(\text{Lemma~\ref{lem2.1} by induction})   \\
	&= N(G,2=-2)       &(\text{Observation~\ref{obs1} and }b=v).  
	\end{align*}
		
	\textbf{Case 2.3.}  $b=v$ and $v$ is not a leaf. 
	
	Therefore, $v\in V_i$ for some $i\in[k]$ and  $N_G(v)=N_H(V_i)$. Then \allowdisplaybreaks
	\begin{align*}
	 R(G,v,-2)&= R(G-v,N_G(v)=N_H(V_i),-1=2) &(\text{Observation~\ref{obs3}})   \\ 		   
	&\leq N(G-v,1)     & (\text{Lemma~\ref{lem2.3} by induction})   \\
	&= N(G,2=-2)       & (\text{Observation~\ref{obs1} and }b=v).     
	& \qedhere 
	\end{align*} 	
\end{proof} 

\begin{proof}[Proof of Lemma~\ref{lem1.2}] Let $i\in[k]$. If $|V_i|=1$, then we are done by Lemma~\ref{lem1.1}. Now, suppose that $|V_i|\geq 2$. 
		
	\textbf{Case 1.} $G$ is odd. 
	
	Let $b$ be an optimal move in the game on $G$. So $G-b$ is connected. Since $|V_i|\geq 2$, we have $V_i-b\neq\emptyset$. Therefore, $b$ is a feasible move in the game on $G$ rooted at $V_i$. Then  \allowdisplaybreaks
	\begin{align*}
	N(G,-2=2)&= N(G-b,1=-2)  &(\text{Observation~\ref{obs1}})     \\
	&\geq R(G-b,V_i-b,-2=1)  &(\text{Lemma~\ref{lem1.2} by induction})    \\
	&\geq R(G,V_i,2=-2)   
	&(\text{Observation~\ref{obs2}}).                            
	\end{align*}	
		
	\textbf{Case 2.} $G$ is even. 
	
	Let $a$ be an optimal move in the game on $G$ rooted at $V_i$. 
	
	\textbf{Case 2.1.} $a$ is a feasible move in the game on $G$. 
	
	So $G-a$ is connected. Then \allowdisplaybreaks
	\begin{align*}
	R(G,V_i,-1=2)&= R(G-a,V_i-a,1=-1)   & (\text{Observation~\ref{obs2}})       \\
	&\geq N(G-a,-1=1)    & (\text{Lemma~\ref{lem1.2} by induction}) 	   \\
	&\geq N(G,2=-1)      & (\text{Observation~\ref{obs1}}).                   
	\end{align*}	 
	
	\textbf{Case 2.2.} $a$ is not a feasible move in the game on $G$. 
	\begin{figure}[H] \centering
		\begin{tikzpicture}[baseline=0.5ex,scale=0.75]	
		\draw[thick,line width=0.5pt] (6,1.25) -- (7.5,2);
		\draw[thick,line width=0.5pt] (6,2) -- (7.5,2);
		\draw[thick,line width=0.5pt] (6,2.75) -- (7.5,2);
		\draw (6,2) ellipse (4mm and 13mm);
		\draw[fill=black] (6,1.25) circle (2pt);
		\draw[fill=black] (6,2) circle (2pt);	
		\draw[fill=black] (6,2.75) circle (2pt);
		\draw (5,1.25) ellipse (12mm and 3mm);
		\draw (5,2) ellipse (12mm and 3mm);
		\draw (5,2.75) ellipse (12mm and 3mm);
		\draw[fill=black] (7.5,2) circle (2pt) node[below=2pt] {$a$};
		\draw (7.5,2) ellipse (2.5mm and 6mm);
		\draw (7.5,1) node {$V_j$};
		\draw (6,0.25) node {$V_i$};
		\end{tikzpicture} 
		\caption{The graph $G$ in Case 2.2 of Lemma~\ref{lem1.2}.} \label{fig:1.2}
	\end{figure}
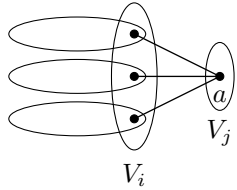		
	So $G-a$ is disconnected. Since $a$ is a feasible move  in the game on $G$ rooted at $V_i$, we have $a\in V_j$ for some $j\in[k]$ and $N_G(V_j)=N_H(V_j)$. Since $G-a$ is disconnected, $V_j=\{a\}$ and $a$ is not a leaf. If $i=j$, then every component of $G-a$ does not contain  a vertex in $V_i$. If there is a vertex set $V_l$, where $l\notin\{i,j\}$, then either $G-a$ is connected or there is a component of $G-a$ which does not contain a vertex in $V_i$. Hence $V_j=\{a\}$ for some $j\neq i$, $N_H(V_j)=V_i$ and $N_H(V_i)=V_j$.
	Therefore, $G$ is a $K_{m,n}$-tree with partite classes $V_i$, $V_j$. Then, by Lemma~\ref{lemKmn},
	\begin{equation*}
		N(G,-1)\leq R(G,V_i,-1). \qedhere
	\end{equation*}
\end{proof}

\begin{proof}[Proof of Lemma~\ref{lem1.3}]
	Let $i\in[k]$. If $|N_H(V_i)|=1$ or $N_H(V_i)=V_j$ for some $j\in[k]$, then we are done by Lemmas~\ref{lem1.1} or~\ref{lem1.2}, respectively. Now, suppose that $|N_H(V_i)|\geq 2$ and $V_i$ is joined to at least two sets in $V_1,\dots,V_k$.
	
	\textbf{Case 1.}  $G$ is odd. 
	
	Let $b$ be an optimal move in the game on $G$. So $G-b$ is connected. Since $|N_H(V_i)|\geq 2$, we have $N_H(V_i)-b\neq\emptyset$.  Then $b$ is a feasible move in the game on $G$ rooted at $N_H(V_i)$. Then \allowdisplaybreaks
	\begin{align*}
	N(G,-2=2)&=N(G-b,1=-2)   	& (\text{Observation~\ref{obs1}})   \\
	&\geq R(G-b,N_H(V_i)-b,-2=1)  &(\text{Lemma~\ref{lem1.3} by induction})      \\
	&\geq R(G,N_H(V_i),2=-2)      &(\text{Observation~\ref{obs2}}).                        
	\end{align*}	
		
	\textbf{Case 2.} $G$ is even. 
	
	Let $a$ be an optimal move in the game on $G$ rooted at $N_H(V_i)$. 
	
	\textbf{Case 2.1.} $a$ is a feasible move in the game on $G$. 
	
	So $G-a$ is connected. Then \allowdisplaybreaks
	\begin{align*}
	R(G,N_H(V_i),-1=2)&= R(G-a,N_H(V_i)-a,1=-1) & (\text{Observation~\ref{obs2}})  \\
	&\geq N(G-a,-1=1)     	& (\text{Lemma~\ref{lem1.3} by induction})     \\
	&\geq N(G,2=-1)      	& (\text{Observation~\ref{obs1}}).               
	\end{align*}
	
	\textbf{Case 2.2.} $a$ is not a feasible move in the game on $G$. 

	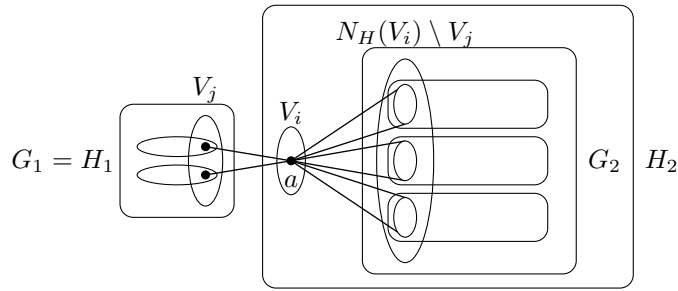
\begin{figure}[H] \centering
		\begin{tikzpicture}[baseline=0.5ex,scale=0.75]	
		\draw[thick,line width=0.5pt] (7.5,2) -- (9.5,3.35);
		\draw[thick,line width=0.5pt] (7.5,2) -- (9.5,2.65);
		\draw[thick,line width=0.5pt] (7.5,2) -- (9.5,2.35);
		\draw[thick,line width=0.5pt] (7.5,2) -- (9.5,1.65);
		\draw[thick,line width=0.5pt] (7.5,2) -- (9.5,1.35);
		\draw[thick,line width=0.5pt] (7.5,2) -- (9.5,0.65);
		\draw[thick,line width=0.5pt] (6,1.75) -- (7.5,2);
		\draw[thick,line width=0.5pt] (6,2.25) -- (7.5,2);
		\draw[rounded corners=5pt] (7,-0.25) rectangle (13.5,4.75);	
		\draw[rounded corners=5pt] (8.75,0) rectangle (12.5,4);		
		\draw[rounded corners=5pt] (4.5,1) rectangle (6.5,3);
		\draw[rounded corners=5pt] (9.2,2.575) rectangle (12,3.425);
		\draw[rounded corners=5pt] (9.2,1.575) rectangle (12,2.425);
		\draw[rounded corners=5pt] (9.2,0.575) rectangle (12,1.425);	
		\draw (6,2) ellipse (3mm and 8mm);
		\draw (9.5,2) ellipse (5mm and 18mm);
		\draw[fill=white] (9.5,1) ellipse (2mm and 3.5mm);	
		\draw[fill=white] (9.5,2) ellipse (2mm and 3.5mm);
		\draw[fill=white] (9.5,3) ellipse (2mm and 3.5mm);		
		\draw (6,3.25) node {$V_j$};
		\draw (9.5,4.25) node {$์N_H(V_i)\setminus V_j$};
		\draw (3.5,2) node {$G_1=H_1$};
		\draw (13,2) node {$G_2$};
		\draw (14,2) node {$H_2$};
		\draw[fill=black] (6,1.75) circle (2pt);	
		\draw[fill=black] (6,2.25) circle (2pt);
		\draw (5.5,1.75) ellipse (7mm and 1.75mm);
		\draw (5.5,2.25) ellipse (7mm and 1.75mm);
		\draw[fill=black] (7.5,2) circle (2pt) node[below=2pt] {$a$};
		\draw (7.5,2) ellipse (2.5mm and 6mm);
		\draw (7.5,2.85) node {$V_i$};
		\end{tikzpicture} 
		\caption{The graph $G$ in Case 2.2 of Lemma~\ref{lem1.3}.} \label{fig:1.3}
	\end{figure}	
	So $G-a$ is disconnected. Since $a$ is a feasible move in the game on $G$ rooted at $N_H(V_i)$, we have $a\in V_j$ for some $j\in[k]$ and $N_G(V_j)=N_H(V_j)$. Since $G-a$ is disconnected, $V_j=\{a\}$ and $a$ is not a leaf. Suppose that $i\neq j$. Since $V_i$ is joined to at least two sets, $V_i$ and $N_H(V_i)$ lie in the same component of $G-a$, but then the other components of $G-a$ does not contain  a vertex in $N_H(V_i)$, which is a contradiction. Hence $V_i=\{a\}$. Let $V_j\subseteq N_H(V_i)$ and let $G_1$ be the union of components in $G-a$ containing some vertices of $V_j$ and let $G_2=G-a-G_1$. By assumption, $G_2$ is not empty. 
	
	First, we shall show that
	\begin{center}
		$R(G,N_H(V_i),-1)\geq R(G_1,V_j,-1)+ R(G_2,N_H(V_i)\setminus V_j,-1)$,
	\end{center}
	by considering a strategy for Bob who plays second in the game on $G$ rooted at $N_H(V_i)$ after Alice grabs~$a$. He plays optimally as Player $-1$ in the game on $G_1$ rooted at $V_j$ and plays optimally as Player $-1$ in the game on $G_2$ rooted at $N_H(V_i)\setminus V_j$. Since $|V(G_1)|+|V(G_2)|$ is odd, he plays as Player $1$ in one game and as Player $2$ in the other. Now, we check that Bob's moves are feasible in the game on $G$ rooted at $N_H(V_i)$ and Alice's moves are feasible in the game on $G_1$ rooted at $V_j$ and the game on $G_2$ rooted at $N_H(V_i)\setminus V_j$. Indeed, after each move of Bob, every remaining component in $G_1$ or $G_2$ contains a vertex in $V_j$ or $N_H(V_i)\setminus V_j$, respectively. Then every remaining component of $G$ contains a vertex in $N_H(V_i)$. That is, his moves are feasible in the game on $G$ rooted at $N_H(V_i)$. On the other hand, after each move of Alice, every remaining component of $G$ contains a vertex in $N_H(V_i)$. Then every remaining component of $G_1$ or $G_2$ contains a vertex in $V_j$ or $N_H(V_i)\setminus V_j$, respectively. That is, her moves are feasible in the game on $G_1$ rooted at $V_j$ and the game on $G_2$ rooted at $N_H(V_i)\setminus V_j$. Hence
	\begin{equation} \label{eq:gameA1}
	R(G,N_H(V_i),-1)\geq R(G_1,V_j,-1)+ R(G_2,N_H(V_i)\setminus V_j,-1).
	\end{equation}
	Next, we let $H_1=G_1$ and $H_2=G-G_1$. We observe that $V_j=V(H_1)\cap N_G(V(H_2))$ and $\{a\}=V(H_2)\cap N_G(V(H_1))$ are root sets of $H_1$ and $H_2$, respectively, and $a$ is adjacent to all vertices in $V_j$. 
	Hence 
	\begin{align*} 
	R(G,V_j,-2=1)&\geq R(G_1,V_j,-2)+R(G-G_1,a,-1)    &(\text{Lemma~\ref{lemB.1}}) \\
	&=R(G_1,V_j,-2)+R(G_2,N_H(V_i)\setminus V_j,-2)+w(a) 
	&(\text{Observation~\ref{obs3}}), 				            
	\end{align*}  	            
	which is equivalent to 
	\begin{equation}   \label{eq:gameA2}
	R(G,V_j,-1)\leq R(G_1,V_j,-1)+R(G_2,N_H(V_i)\setminus V_j,-1), 
	\end{equation}
	by considering the total weight of $G, G_1$ and $G_2$. Then 	 \allowdisplaybreaks
	\begin{align*}
	N(G,-1) &\leq R(G,V_j,-1) 	&(\text{Lemma~\ref{lem1.2}})  \\
	&\leq R(G_1,V_j,-1)+R(G_2,N_H(V_i)\setminus V_j,-1)  	&(\text{Inequality~\ref{eq:gameA2}})  \\ 
	&\leq R(G,N_H(V_i),-1)   	&(\text{Inequality~\ref{eq:gameA1}}). 
	& \qedhere 
	\end{align*} 	 
\end{proof} 

\begin{proof}[Proof of Lemma~\ref{lem2.3}]
	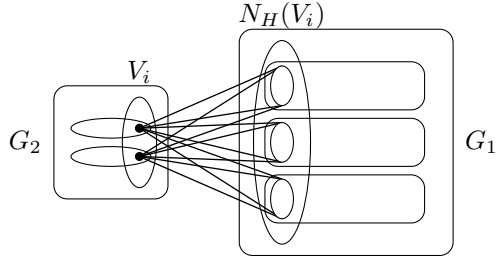
\begin{figure}[H] \centering
		\begin{tikzpicture}[baseline=0.5ex,scale=0.75]	
		\draw[thick,line width=0.5pt] (5.5,1.75) -- (8,3.35);
		\draw[thick,line width=0.5pt] (5.5,1.75) -- (8,2.65);
		\draw[thick,line width=0.5pt] (5.5,1.75) -- (8,2.35);
		\draw[thick,line width=0.5pt] (5.5,1.75) -- (8,1.65);
		\draw[thick,line width=0.5pt] (5.5,1.75) -- (8,1.35);
		\draw[thick,line width=0.5pt] (5.5,1.75) -- (8,0.65);
		\draw[thick,line width=0.5pt] (5.5,2.25) -- (8,3.35);
		\draw[thick,line width=0.5pt] (5.5,2.25) -- (8,2.65);
		\draw[thick,line width=0.5pt] (5.5,2.25) -- (8,2.35);
		\draw[thick,line width=0.5pt] (5.5,2.25) -- (8,1.65);
		\draw[thick,line width=0.5pt] (5.5,2.25) -- (8,1.35);
		\draw[thick,line width=0.5pt] (5.5,2.25) -- (8,0.65);
		\draw[rounded corners=5pt] (7.25,0) rectangle (11,4);		
		\draw[rounded corners=5pt] (4,1) rectangle (6,3);
		\draw[rounded corners=5pt] (7.7,2.575) rectangle (10.5,3.425);
		\draw[rounded corners=5pt] (7.7,1.575) rectangle (10.5,2.425);
		\draw[rounded corners=5pt] (7.7,0.575) rectangle (10.5,1.425);	
		\draw (5.5,2) ellipse (3mm and 8mm);
		\draw (8,2) ellipse (5mm and 18mm);
		\draw[fill=white] (8,3) ellipse (2mm and 3.5mm);
		\draw[fill=white] (8,2) ellipse (2mm and 3.5mm);
		\draw[fill=white] (8,1) ellipse (2mm and 3.5mm);
		\draw (5.5,3.25) node {$V_i$};
		\draw (8,4.25) node {$์N_H(V_i)$};
		\draw (3.5,2) node {$G_2$};
		\draw (11.5,2) node {$G_1$};
		\draw[fill=black] (5.5,1.75) circle (2pt);	
		\draw[fill=black] (5.5,2.25) circle (2pt);
		\draw (5,1.75) ellipse (7mm and 1.75mm);
		\draw (5,2.25) ellipse (7mm and 1.75mm);
		\end{tikzpicture} 
		\caption{The graph $G$ in Lemma~\ref{lem2.3}.} \label{fig:2.3}
	\end{figure}  
	For $i\in[k]$, let $G_1$ be the union of components of $G-V_i$ containing some vertices of $N_H(V_i)$ and let $G_2=G-G_1$. We observe that $N_H(V_i)=V(G_1)\cap N_G(V(G_2))$ and $V_i=V(G_2)\cap N_G(V(G_1))$ are root sets of $G_1$ and $G_2$, respectively, and every vertex in $N_H(V_i)$ is joined to every vertex in $V_i$. Then 
	\allowdisplaybreaks
	\begin{align*}
	N(G,2=-1)&\leq R(G,V_i,-1=2) 	&(\text{Lemma~\ref{lem1.2}})	 \\
	&\leq R(G,N_H(V_i),1)    		&(\text{Lemma~\ref{lemB.2}}).
	& \qedhere 
	\end{align*} 
\end{proof}

\begin{proof}[Proof of Lemma~\ref{lem2.2}] 
	For $i\in[k]$, let $G_1$ be the union of components of $G-N_H(V_i)$ containing some vertices of $V_i$ and let $G_2=G-G_1$. We observe that $V_i=V(G_1)\cap N_G(V(G_2))$ and $N_H(V_i)=V(G_2)\cap N_G(V(G_1))$ are root sets of $G_1$ and $G_2$, respectively, and every vertex in $V_i$ is joined to every vertex in $N_H(V_i)$.	Then 	
	\allowdisplaybreaks
	\begin{align*}
	N(G,2=-1)&\leq R(G,N_H(V_i),-1=2) 	&(\text{Lemma~\ref{lem1.3}})	 \\
	&\leq R(G,V_i,1)   					&(\text{Lemma~\ref{lemB.2}}).
	& \qedhere 
	\end{align*} 
\end{proof}

\begin{proof}[Proof of Lemma~\ref{lem2.1}]
	Let $v\in V(G)$. 
	
	\textbf{Case 1.} There is a cut edge $uv$ incident to $v$. 
	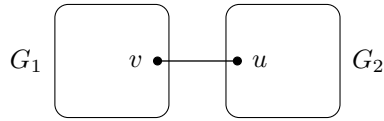
\begin{figure}[H] \centering
		\begin{tikzpicture}[baseline=0.5ex,scale=0.75]	
		\draw[thick,line width=0.5pt] (5.8,2) -- (7.2,2);
		\draw[rounded corners=5pt] (7,1) rectangle (9,3);		
		\draw[rounded corners=5pt] (4,1) rectangle (6,3);
		\draw (3.5,2) node {$G_1$};
		\draw (9.5,2) node {$G_2$};
		\draw[fill=black] (5.8,2) circle (2pt) node[left=2pt] {$v$};	
		\draw[fill=black] (7.2,2) circle (2pt) node[right=2pt] {$u$};
		\end{tikzpicture} 
		\caption{The graph $G$ in Case 1 of Lemma~\ref{lem2.1}.} \label{fig:2.1}
	\end{figure}	
	Let $G_1$ be the component of $G-uv$ containing~$v$ and let $G_2=G-G_1$.  We observe that $\{v\}=V(G_1)\cap N_G(V(G_2))$ and $\{u\}=V(G_2)\cap N_G(V(G_1))$ are root sets of $G_1$ and $G_2$, respectively, and $v$ is adjacent to $u$. Then  \allowdisplaybreaks
	\begin{align*}
	R(G,v,1)	&\geq R(G,u,2=-1)	&(\text{Lemma~\ref{lemB.2}}) \\ 
	&\geq N(G,-1=2) &(\text{Lemma~\ref{lem1.1}}).   
	\end{align*} 		
	
	\textbf{Case 2.} There is no cut edge incident to $v$. 
	
	Then $v\in V_j$ for some $j\in[k]$ and  $N_G(v)=N_H(V_j)$.
	
	\textbf{Case 2.1.}  $|V_j|\geq 2$. 
	
	Therefore, $v$ is a feasible move in the game on $G$. So $G-v$ is connected. Then \allowdisplaybreaks
	\begin{align*}
	R(G,v,1=-2)&= R(G-v,N_G(v)=N_H(V_j),-1) 	&(\text{Observation~\ref{obs3}}) \\
	&\geq N(G-v,-1=1)    	&(\text{Lemma~\ref{lem1.3} by induction}) \\ 
	&\geq N(G,2)     		&(\text{Observation~\ref{obs1}}).
	\end{align*} 

	\textbf{Case 2.2.} $|V_j|=1$. 
	
	Then, by Lemma~\ref{lem2.2},
	\begin{equation*}
		R(G,v,1)=R(G,V_j,1)\geq N(G,2). \qedhere
	\end{equation*}
	
\end{proof}

	We proceed to prove our main theorem.

\begin{proof}[Proof of Theorem~\ref{thm:mainthm}]
	Let $G$ be an even B$(T)$-tree, where $T$ is a tree and let $v\in V(G)$. Then, by Lemmas~\ref{lem1.1} and~\ref{lem2.1}, it follows that
	\begin{equation*}
		N(G,2=-1)\leq R(G,v,-1=2)\leq N(G,1).
	\end{equation*}
	Therefore, Alice wins the game on $G$.
\end{proof}

	We now deduce Corollary~\ref{cor:blowupcycle} from Theorem~\ref{thm:mainthm}.

\begin{proof}[Proof of Corollary~\ref{cor:blowupcycle}] 
	We give a proof by induction on the number of vertices. Let $G$ be an even blow-up of a cycle. We note that every vertex of $G$ is a non-cut vertex. Alice claims a maximum weighted vertex of $G$ in her first move, say a vertex $a$. Let $b$ be the vertex claimed by Bob in his first move. Then $G-\{a,b\}$ is an even blow-up of either a path or a cycle. If $G-\{a,b\}$ is an even blow-up of a path, then Alice wins the game on $G-\{a,b\}$ by Theorem~\ref{thm:mainthm}. Otherwise, Alice wins the game on $G-\{a,b\}$ by the induction hypothesis. In both cases, since $w(a)\geq w(b)$, Alice wins the game on $G$.
\end{proof}

\section{Concluding Remarks} \label{section-conclusion}
 
	We provide two new classes, namely B$(T)$-trees and B$(C_{2n})$, of bipartite even graphs which satisfy Conjecture~\ref{conj}. However, this conjecture is still open. It was shown in~\cite{Egawa:Kmntree} that Lemmas~\ref{lem1.1} and~\ref{lem2.1} are not true for general bipartite graphs, therefore this method cannot be directly used to solve the full conjecture. There are several variants of the graph grabbing game, for example,  the graph sharing game (see~\cite{Chaplick:sharing,Cibulka:sharing,Gagol:sharing,Knauer:pizza,Micek:sharing}), the graph grabbing game on $\{0,1\}$-weighted graphs (see~\cite{Eoh:01weighted}), and the convex grabbing game (see~\cite{Matsumoto:Convex}), where a few problems were left open. 
	
\section*{Acknowledgment}	

	The first author is grateful for financial support from the Science Achievement Scholarship of Thailand. 

	\bibliographystyle{siam} 
	\bibliography{GGG}

\end{document}